\numberwithin{equation}{section}
\newtheorem{Theorem}{Theorem}[section]
\newtheorem{Remark}[Theorem]{Remark}
\numberwithin{equation}{section}
\def \Vh0{\stackrel{\circ}{V}_h} \def\to{\rightarrow}
\newcommand{\q}{\quad}
\def\ms{\medskip}  \def\ss{\smallskip}
\newcommand{\lc}
{\mathrel{\raise2pt\hbox{${\mathop<\limits_{\raise1pt\hbox
{\mbox{$\sim$}}}}$}}}
\newcommand{\gc}
{\mathrel{\raise2pt\hbox{${\mathop>\limits_{\raise1pt\hbox{\mbox{$\sim$}}}}$}}}
\newcommand{\ec}
{\mathrel{\raise2pt\hbox{${\mathop=\limits_{\raise1pt\hbox{\mbox{$\sim$}}}}$}}}
\def\bb{\begin{equation}} \def\ee{\end{equation}}
\def\beqn{\begin{eqnarray}}  \def\eqn{\end{eqnarray}}
\def\beqnx{\begin{eqnarray*}} \def\eqnx{\end{eqnarray*}}
\def\bn{\begin{enumerate}} \def\en{\end{enumerate}}
\def\bd{\begin{description}} \def\ed{\end{description}}
\newenvironment{figurehere}
  {\def\@captype{figure}}
  {}
\begin{document}

\title{A Multilevel Sampling Method \\ for Detecting Sources in a Stratified Ocean Waveguide\footnote{The work was initiated when the first and second authors were visiting CUHK and 
supported by the Direct Grant for Research from CUHK.}}
\author{Keji Liu\thanks{Shanghai Key Laboratory of Financial Information Technology, Laboratory Center, Shanghai University of Finance and Economics, 777 Guoding Road, Shanghai 200433, P.R.China.
 ({\tt liu.keji@mail.shufe.edu.cn}). The work of this author was substantially supported by the Science and Technology Commission of Shanghai Municipality under grants 14511107202 and 15511107302.} 
~\q Yongzhi Xu\thanks{Department of Mathematics, University of
Louisville, Louisville, KY 40245, USA. 
({\tt
ysxu0001@louisville.edu})}
~\q Jun Zou\thanks{Department of Mathematics, The Chinese University
of Hong Kong, Shatin, Hong Kong.
({\tt zou@math.cuhk.edu.hk}). The work of this author was substantially supported by Hong Kong RGC grants (projects 14306814 and 405513).}
}
\date{}

\maketitle\textbf{Abstract.} In the reconstruction process of sound waves in a 3D stratified waveguide, 
a key technique is to effectively reduce the huge computational demand. 
In this work, we propose an efficient and simple multilevel reconstruction method to help locate the accurate position 
of a point source in a stratified ocean. The proposed method can be viewed as a direct sampling method since 
no solutions of optimizations or linear systems are involved. The novel method exhibits several strengths: fast convergence, robustness against noise, advantages in computational complexity and applicability for a very small number of receivers.

\ss
\textbf{Key words.} Stratified ocean, acoustic point source, direct sampling method.

{\bf MSC classifications.} 35R30, 41A27, 76Q05.

\maketitle

\section{Introduction}
The primal goal of  this work is to develop an effective numerical method for locating underwater sound sources 
in a stratified ocean environment.
Sound propagation in the ocean environment has great importance, and 
a large number of experiments have been carried out for long-range propagations. Many numerical
schemes have been proposed for range-dependent propagations and scattering
problems in the ocean environment \cite{JF, JKPS, PR}. Mathematical investigations and modeling of wave
propagations in underwater environment are widely available in literature; see 
\cite{AK, Bre, GL1, GL2, GX1, GX,  GXT, JJ, J, Wil} and the references therein.

Underwater sound propagation has been studied based on the physical principles of the acoustics. As a simple but reasonably realistic model for studying the effect of the underwater sound wave propagation we shall 
consider the stratified media \cite{AK, Bre} in this work.  To simplify the configuration but still retain the physical features of interest, we model the problem as a horizontally stratified waveguide, and assume 
the sound speed $c$ depends essentially on the depth of the waveguide. A typical sound velocity profile 
consists of the surface channel, the thermocline and isothermal layers; see
Figure \ref{fig:SVP} \cite{L}. 
As we know, the surface channel is mainly formed 
due to a shallow isothermal layer appearing in winter, but can be caused also by an input of fresh water close to river estuaries, or by water which is quite cold in surface. The thermocline is a layer in which 
both temperature and sound velocity decrease with depth, while the isothermal layer 
is a layer with constant temperature, where 
the sound speed increases linearly with depth due to the hydrostatic pressure. 

In stratified media, sound waves can be trapped by acoustic ducts 
to propagate horizontally \cite{Bre, Wil}, therefore the scattering of
sound waves by bounded obstacles is much more complicated
than that in homogeneous media.

\begin{figurehere}
     \begin{center}
     \vskip -0.3truecm
           \scalebox{0.6}{\includegraphics{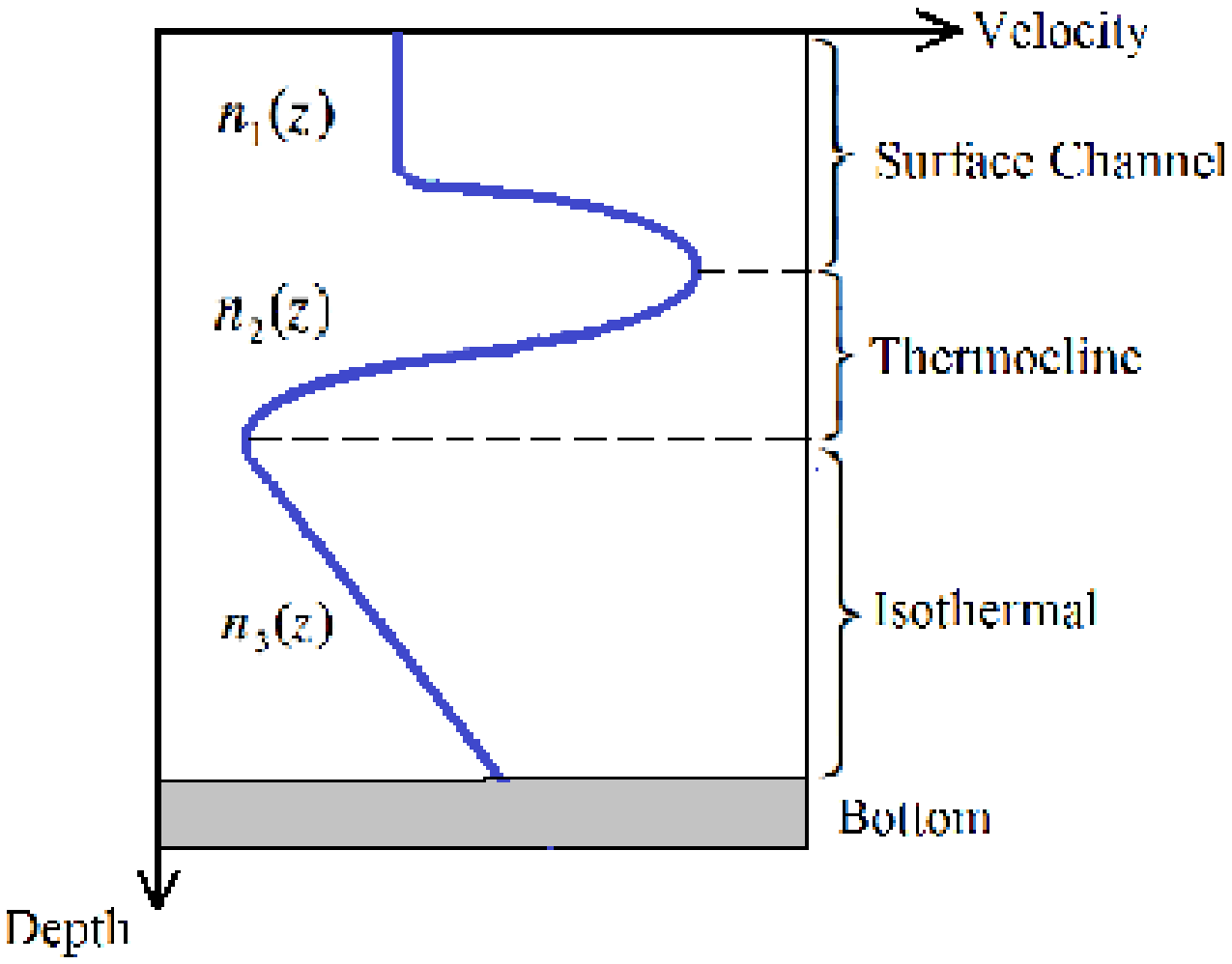}}\\
     \vskip -0.3truecm
    \caption{\label{fig:SVP}\emph{The demonstration of a typical sound velocity profile.}}
     \end{center}
 \end{figurehere}
 
The wave propagation in stratified media has been well studied, and we have 
a general understanding and a detailed description of how sound travels in the ocean. The theory could be applied to make quantitative computations of the sound field induced by an artificial source. Some mathematical modeling 
was studied for the wave propagation in stratified media \cite{GX1, GX2, GX},  
several methods were developed 
for solving the modeling problem and very interesting results were observed \cite{GX3, GX4, GXT}. 
Other studies can be found in \cite{AGL1, PEE, P}.

However, contrary to the large number of techniques developed for simulating wave propagations, 
few studies exist for inverse underwater sound problems in three dimensions, and 
nearly all existing computations are for the reconstruction of 2D obstacles.  
Imaging a scatterer in a waveguide is much more challenging than in the free space. As mentioned in \cite{CH}, due to the presence of two parallel infinite boundaries of the waveguide, only a finite number
of wave modes can propagate at long distance, while the other modes decay exponentially \cite{XU1}. 
Even though many methods have been developed for inverse acoustic scattering problems, 
e.g.,  the MUSIC-type algorithm to locate small inclusions \cite{AIK}, the generalized dual space method \cite{XML}, 
the linear sampling method \cite{AGL2, BL, SZ}, the Kirchhoff migration-based method \cite{RLA}, 
and the direct sampling method  \cite{CILZ, LXZ} to reconstruct obstacles, 
they are all for acoustic waves in homogeneous waveguides. 
More investigations are desired for inverse problems in 3D stratified waveguides. 

%


The detection of black boxes of airplanes attracts much more attention recently since the air crash of MH370. 
A multilevel sampling method is proposed in this work to detect the location of a source in a stratified ocean waveguide with local non-stratified perturbations, which would provide an effective and simple alternative to estimate the positions of the black boxes.  
We shall study a full 3D underwater stratified model, present a mathematical analysis and an effective algorithm to locate the sound source in the complicate stratified ocean environment. 
A time-harmonic point source is assumed and located in the essentially stratified ocean waveguide, which 
is bounded above by a horizontal planar free surface where the acoustic pressure $p$ vanishes, i.e., $p=0$, 
and below by a horizontal planar bottom on which the normal derivative of the acoustic pressure $p$ vanishes, 
namely, $\frac{\partial p}{\partial \nu}=0$.  This set-up and its studies can be found in the lecture notes of physics 
\cite{JJ} and many other works, see, e.g., \cite{GX, GX4, GXT, J, LXZ}.
Our new method can be applied to other stratified oceans with appropriate modifications.

The paper is organized as follows. In section \ref{AM}, the acoustic model and Green's function of the 
3D three-layered waveguide are stated, along with some useful notation, properties and identities. In section \ref{DS}, we 
formulate the direct scattering problem of 3D three-layered waveguide with a known inclusion, and an iterative method is proposed to solve the direct scattering problem of the perturbed waveguide. Section \ref{MSM} describes a novel multilevel sampling method using partial scattered data, and section \ref{NS} provides extensive numerical experiments to evaluate the performance of the multilevel sampling method. Finally, some concluding remarks are presented in section \ref{CR}.

\section{Acoustic model in a three-layered waveguide and Green's function}\label{AM}
In this section, we describe the direct scattering problem of our interest. Consider a three dimensional waveguide $\Omega$ in $\mathbb{R}_h^3=\mathbb{R}^2\times(0,h)$, where $h>0$ is the depth of the ocean. The third coordinate axis is singled out as the one orthogonal to the waveguide, so we shall write
\[x=(x_1,x_2,x_3)^\top=(\tilde{x},x_3)^\top \quad\forall x\in\mathbb{R}_h^3.\]
The upper and lower boundaries of the waveguide are represented by 
\[\Gamma^+:=\{x\in\mathbb{R}^3;\;x_3=h\}\quad\text{and}\quad\Gamma^-:=\{x\in\mathbb{R}^3;\;x_3=0\}.\]
There are three layers lying inside the waveguide, given respectively by 
\begin{eqnarray*}
M_1&=&\{x\in\mathbb{R}^3;\;0<x_3<d_1\},\\[2mm]
M_2&=&\{x\in\mathbb{R}^3;\;d_1<x_3<d_2\},\\[2mm]
M_3&=&\{x\in\mathbb{R}^3;\;d_2<x_3<h\},
\end{eqnarray*}
where $d_1$ and $d_2$ are positive constants, and $h>d_2>d_1$.
Moreover, the two interfaces between each two layers inside the waveguide are written as 
\begin{eqnarray*}
\Gamma_1=\{x\in\mathbb{R}^3;\;x_3=d_1\} \quad \mbox{and} \quad 
\Gamma_2=\{x\in\mathbb{R}^3;\;x_3=d_2\}.
\end{eqnarray*}
The detailed configuration of the waveguide is shown in Figure \ref{fig:model}.
\begin{figurehere}
     \begin{center}
           \scalebox{0.6}{\includegraphics{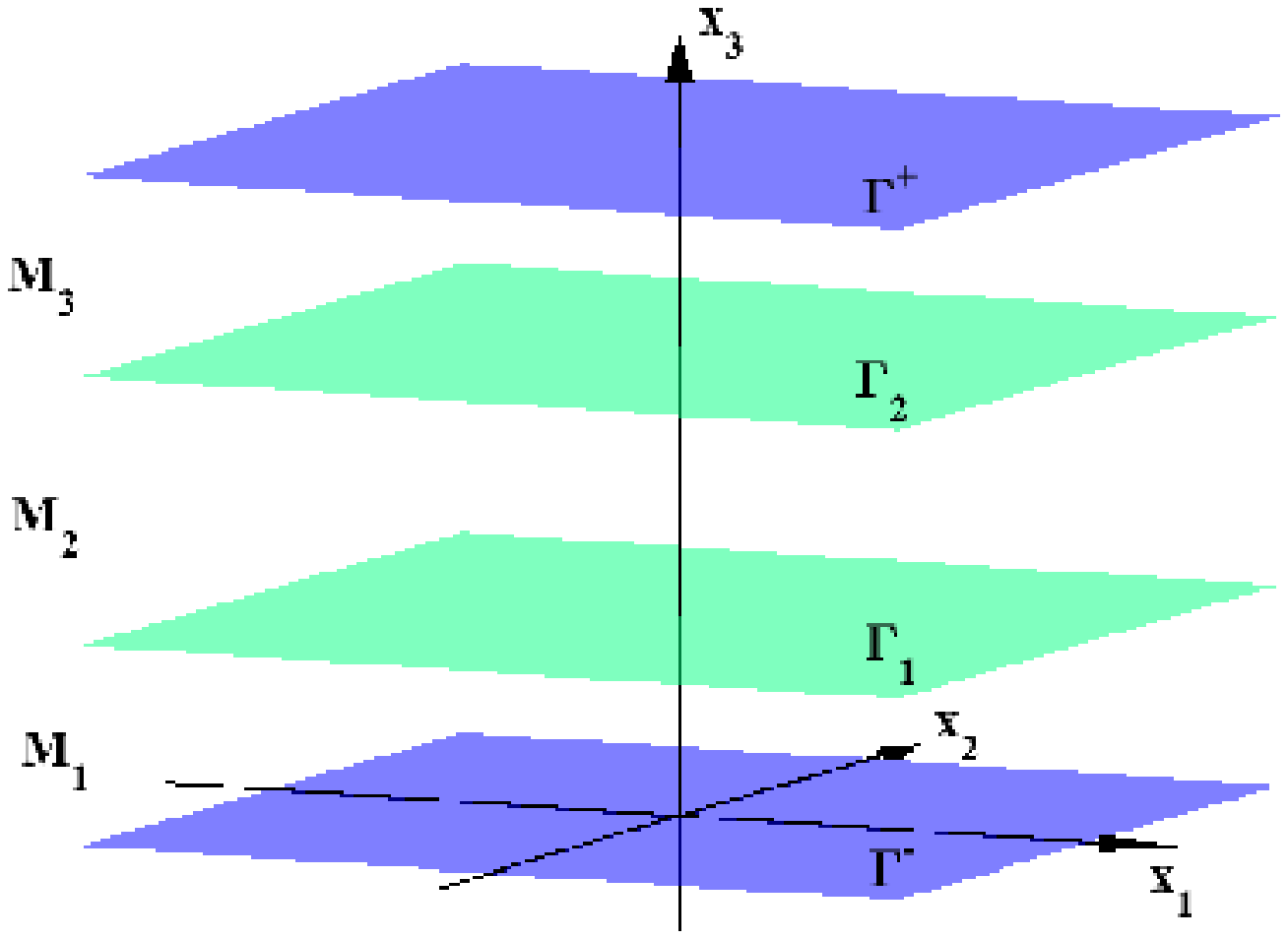}}\\
     \vskip -0.3truecm
    \caption{\label{fig:model}\emph{Geometrical illustration of the waveguide.}}
     \end{center}
 \end{figurehere}

We shall write the total sound pressure field without convection in the 3-layered waveguide 
with a wave-penetrable medium as 
\begin{equation}
p(x)=\left\{
\begin{aligned}
&p_1(x), \quad x\in M_1\,,\\[2mm]
&p_2(x), \quad x\in M_2\,,\\[2mm]
&p_3(x), \quad x\in M_3\,, 
\end{aligned}
\right.
\end{equation} 
which is related to a fluid velocity $v$ by the relation 
\begin{equation}
\nabla p=-\rho \frac{\partial v}{\partial t},
\end{equation}
where $\rho$ is the medium density.

Consider a source point $x_s=(\tilde{x}_s, x_{s3})$, located in the layer $M_1$, then 
the corresponding propagating wave, is modeled by the following Helmholtz system (see \cite{GX4} for a two-layered model), 
whose solution is the so-called Green's function associated with 
the current three-layered stratified waveguide and will be fundamental 
to our source reconstruction in the subsequent sections:
\begin{equation}\label{eq:hel}
\left\{
\begin{aligned}
&\Delta p_1+k_1^2n^2_1p_1=-\delta(\tilde{x}-\tilde{x}_s)\delta(x_3-x_{s3})\quad  \text{in}\quad M_1,\\[2mm]
&\Delta p_2+k_2^2n^2_2p_2=0\quad  \text{in}\quad M_2,\\[2mm]
&\Delta p_3+k_3^2n^2_3p_3=0\quad  \text{in}\quad M_3,\\[2mm]
&\rho_1 p_1=\rho_2p_2 \quad  \text{on}\quad \Gamma_1,\\[2mm]
&\frac{\partial p_1}{\partial\nu}=\frac{\partial p_2}{\partial \nu} \quad  \text{on}\quad \Gamma_1,\\[2mm]
&\rho_2p_2=\rho_3p_3 \quad  \text{on}\quad \Gamma_2,\\[2mm]
&\frac{\partial p_2}{\partial\nu}=\frac{\partial p_3}{\partial \nu} \quad  \text{on}\quad \Gamma_2,\\[2mm]
&p_1=0 \quad \text{on}\quad \Gamma^-,\\[2mm]
&\frac{\partial p_3}{\partial x_3}=0 \quad \text{on}\quad \Gamma^+,
\end{aligned}
\right.
\end{equation} 
where $p_i$ and $k_i$ $(i=1, 2, 3)$ represent respectively the sound pressure and the wavenumber 
in the layer $M_i$, $n_i$ and $\rho_i$ are respectively the refractive index and density in $M_i$, 
and $\nu$ is the normal vector. As we may see, 
the model \eqref{eq:hel} is a 3D model with a three-layered refraction index. This is clearly 
more realistic than the model in \cite{GX4}, which is for two dimensions with only a two-layered refraction index. 
We know that the three-layered sound velocity profile may lead to underwater sound channel in the ocean, 
while the two-layered model does not reflect this important feature of the underwater sound. 
Moreover, the numerical computation of the 3D Green's function is much more complex and difficult to compute
than the 2D one.

A radiation condition is needed to ensure the uniqueness of the system \eqref{eq:hel}, and its specific form 
will be stated at the end of this section after we derive the Green's function for the outgoing wave.

We shall write the outgoing Green's function 
%
as $G(\tilde{x},x_3,\tilde{x}_s,x_{s3})$, and it satisfies $G(\tilde{x},x_3,\tilde{x}_s,x_{s3})=G(|\tilde{x}-\tilde{x}_s|,x_3,x_{s3})$. Let $r=|\tilde{x}-\tilde{x}_s|$, we can represent the Green's function by the Hankel transform \cite{AK}, 
\begin{equation} \label{Green}
G(r,x_3,x_{s3})=\frac{1}{2\pi}\int_0^\infty J_0(\xi r)\widehat{G}(\xi,x_3,x_{s3})\xi d\xi,
\end{equation}
where $\widehat{G}(\xi,x_3,x_{s3})$ is given by 
\begin{equation}
\widehat{G}(\xi,x_3,x_{s3})=
\left\{\begin{aligned}
&\widehat{G}_1(\xi,x_3,x_{s3}),\; 0\leq x_3< d_1,\\[2mm]
&\widehat{G}_2(\xi,x_3,x_{s3}),\; d_1< x_3< d_2,\\[2mm]
&\widehat{G}_3(\xi,x_3,x_{s3}),\; d_2< x_3< h,
\end{aligned}
\right.
\end{equation}
and $\widehat{G}_1$, $\widehat{G}_2$ and $\widehat{G}_3$ solve the following system
\begin{numcases}{}
\frac{\partial^2 \widehat{G}_1}{\partial x_3^2}+\tau_1^2\widehat{G}_1=-\delta(\tilde{x}-\tilde{x}_s)\delta(x_3-x_{s3}),\; 0<x_3<d_1, \label{eq:G1}\\[2mm]
\frac{\partial^2 \widehat{G}_2}{\partial x_3^2}+\tau_2^2\widehat{G}_2=0,\; d_1<x_3<d_2, \label{eq:G2}\\[2mm]
\frac{\partial^2 \widehat{G}_3}{\partial x_3^2}+\tau_3^2\widehat{G}_3=0,\; d_2<x_3<h, \label{eq:G3}\\[2mm]
\rho_1\widehat{G}_1(\xi,d_1)=\rho_2\widehat{G}_2(\xi,d_1), \label{eq:G1Di}\\[2mm]
\frac{\partial \widehat{G}_1}{\partial x_3}(\xi,d_1)=\frac{\partial \widehat{G}_2}{\partial x_3}(\xi,d_1), \label{eq:G1Nu}\\[2mm]
\rho_2\widehat{G}_2(\xi,d_2)=\rho_3\widehat{G}_3(\xi,d_2),\label{eq:G2Di}\\[2mm]
\frac{\partial \widehat{G}_2}{\partial x_3}(\xi,d_2)=\frac{\partial \widehat{G}_3}{\partial x_3}(\xi,d_2),\label{eq:G2Nu}\\[2mm]
\widehat{G}_1(\xi,0)=0,\label{eq:G10}\\[2mm]
\frac{\partial \widehat{G}_3}{\partial x_3}(\xi,h)=0,\label{eq:G3h}
\end{numcases}
where $\tau_1=\sqrt{k_1^2n_1^2-\xi^2}$, $\tau_2=\sqrt{k_2^2n_2^2-\xi^2}$ and $\tau_3=\sqrt{k_3^2n_3^2-\xi^2}$\,.
We will now construct $\widehat{G}(\xi,x_3,x_{s3})$ in the form
\[\widehat{G}(\xi,x_3,x_{s3})=-\frac{\phi_1(\xi,x_{3<})\phi_2(\xi,x_{3>})}{W(\phi_1,\phi_2)(\xi,x_{s3})},\]
where $\phi_1$ and $\phi_2$ are the solutions of \eqref{eq:G1}-\eqref{eq:G3} with interface conditions \eqref{eq:G1Di}-\eqref{eq:G2Nu}, $\phi_1$ and $\phi_2$ also satisfy \eqref{eq:G10} and \eqref{eq:G3h} respectively, and 

\begin{equation*}
 x_{3<}=\left\{
\begin{aligned}
&x_3, &0<x_3<x_{s3}\\
&x_{s3}, &x_{s3}<x_3<h
\end{aligned}\;\;,
\right.
\quad 
 x_{3>}=\left\{
\begin{aligned}
&x_{s3}, &0<x_3<x_{s3}\\
&x_3, &x_{s3}<x_3<h
\end{aligned}\;\;,
\right.
\end{equation*}
and
\[W(\phi_1,\phi_2)=\begin{vmatrix}
\phi_1 & \phi_2 \\[1mm]
\phi_1' & \phi_2' \\[1mm]
\end{vmatrix}.\]

Then we can express $\phi_1$ and $\phi_2$ explicitly in the following forms
\begin{equation}
 \phi_1(\xi,x_3)=\left\{
\begin{aligned}
&\sin(\tau_1x_3),\quad&0\leq x_3<d_1,\\[2mm]
&A_1\cos(\tau_2x_3)+B_1\sin(\tau_2x_3),\quad&d_1<x_3<d_2,\\[2mm]
&A_2\cos(\tau_3x_3)+B_2\sin(\tau_3x_3),\quad&d_2<x_3<h,
\end{aligned}
\right.
\end{equation}
with 
\begin{equation*}
\left\{
\begin{aligned}
A_1&=\frac{\rho_1}{\rho_2}\sin(\tau_1d_1)\cos(\tau_2d_1)-\frac{\tau_1}{\tau_2}\sin(\tau_2d_1)\cos(\tau_1d_1),\\[2mm]
B_1&=\frac{\rho_1}{\rho_2}\sin(\tau_1d_1)\sin(\tau_2d_1)+\frac{\tau_1}{\tau_2}\cos(\tau_1d_1)\cos(\tau_2d_1),\\[2mm]
A_2&=\frac{\rho_2}{\rho_3}\Big(A_1\cos(\tau_2d_2)+B_1\sin(\tau_2d_2)\Big)\cos(\tau_3d_2)+\frac{\tau_2}{\tau_3}\Big(A_1\sin(\tau_2d_2)-B_1\cos(\tau_2d_2)\Big)\sin(\tau_3d_2),\\[2mm]
B_2&=\frac{\rho_2}{\rho_3}\Big(A_1\cos(\tau_2d_2)+B_1\sin(\tau_2d_2)\Big)\sin(\tau_3d_2)-\frac{\tau_2}{\tau_3}\Big(A_1\sin(\tau_2d_2)-B_1\cos(\tau_2d_2)\Big)\cos(\tau_3d_2), 
\end{aligned}
\right.
\end{equation*}
and 
\begin{equation}
 \phi_2(\xi,x_3)=\left\{
\begin{aligned}
&A_3\cos(\tau_1x_3)+B_3\sin(\tau_1x_3),\quad& 0\leq x_3<d_1,\\[2mm]
&A_4\cos(\tau_2x_3)+B_4\sin(\tau_2x_3),\quad&d_1<x_3<d_2,\\[2mm]
&\cos(\tau_3(h-x_3)), \quad&d_2<x_3< h,
\end{aligned}
\right.
\end{equation}
with 
\begin{equation*}
\left\{
\begin{aligned}
A_3&=\frac{\rho_2}{\rho_1}\Big(A_4\cos(\tau_2d_1)+B_4\sin(\tau_2d_1)\Big)\cos(\tau_1d_1)+\frac{\tau_2}{\tau_1}\Big(A_4\sin(\tau_2d_1)-B_4\cos(\tau_2d_1)\Big)\sin(\tau_1d_1),\\[2mm]
B_3&=\frac{\rho_2}{\rho_1}\Big(A_4\cos(\tau_2d_1)+B_4\sin(\tau_2d_1)\Big)\sin(\tau_1d_1)-\frac{\tau_2}{\tau_1}\Big(A_4\sin(\tau_2d_1)-B_4\cos(\tau_2d_1)\Big)\cos(\tau_1d_1),\\[2mm]
A_4&=\frac{\rho_3}{\rho_2}\cos(\tau_2d_2)\cos(\tau_3(h-d_2))-\frac{\tau_3}{\tau_2}\sin(\tau_2d_2)\sin(\tau_3(h-d_2)),\\[2mm]
B_4&=\frac{\rho_3}{\rho_2}\sin(\tau_2d_2)\cos(\tau_3(h-d_2))+\frac{\tau_3}{\tau_2}\cos(\tau_2d_2)\sin(\tau_3(h-d_2)).
\end{aligned}
\right.
\end{equation*}

As $W(\phi_1,\phi_2)$ does not depend on $x_{s3}$, we write 
$W(\phi_1,\phi_2)$ as $W(\phi_1,\phi_2)(\xi)$ for convenience. Then by a simple calculation, we derive 
\begin{equation}\label{eq:wronski}
\begin{aligned}
W(\phi_1,\phi_2)(\xi)&=\phi_1(\xi,0)\phi'_2(\xi,0)-\phi_2(\xi,0)\phi'_1(\xi,0)\\[2mm]
&=-\phi_2(\xi,0)\phi'_1(\xi,0)=-A_3\tau_1.
\end{aligned}
\end{equation}

Using all the above expressions,  we can further represent the Green's function in \eqref{Green} as
\begin{equation}
G(r,x_3,x_{s3})=-\frac{1}{2\pi}\int_0^\infty J_0(\xi r)\frac{\phi_1(\xi,x_{3<})\phi_2(\xi,x_{3>})}{W(\phi_1,\phi_2)}\xi d\xi.
\end{equation}

The above integral can be considered as a contour integral along the contour $C_1$ 
from the origin to infinity slightly below the real axis out to some large real values of $\xi$ and then along the axis (cf. \cite{AK}). The integral has poles at the zeros of $W(\phi_1,\phi_2)(\xi)$, which are denoted by $\xi_n$. At the zeros, $\phi_1(\xi_n,x_3)$ and $\phi_2(\xi_n,x_3)$ are linearly dependent, say, there exists a constant $c_n$ such that
\[\phi_2(\xi_n,x_3)=c_n\phi_1(\xi_n,x_3).\]
 
Let $C_2$ be $C_1e^{i\pi}$ with the orientation reversed.
From the fact that $H^{(2)}_0(\xi r)=-H^{(1)}_0(\xi e^{i\pi}r)$, and by the residual theorem, we can readily derive 
\begin{eqnarray}
G(r,x_3,x_{s3})&=&-\frac{1}{2\pi}\int_{C_1} J_0(\xi r)\frac{\phi_1(\xi,x_{3<})\phi_2(\xi,x_{3>})}{W(\phi_1,\phi_2)}\xi d\xi\notag\\[2mm]
&=&-\frac{1}{4\pi}\int_{C_1}\Big(H^{(1)}_0(\xi r)+H^{(2)}_0(\xi r)\Big)\frac{\phi_1(\xi,x_{3<})\phi_2(\xi,x_{3>})}{W(\phi_1,\phi_2)}\xi d\xi\notag\\[2mm]
&=&-\frac{1}{4\pi}\int_{C_1+C_2}H^{(1)}_0(\xi r)\frac{\phi_1(\xi,x_{3<})\phi_2(\xi,x_{3>})}{W(\phi_1,\phi_2)}\xi d\xi\notag\\[2mm]
&=&-\frac{1}{4\pi}\cdot 2\pi i\sum_{n=1}^\infty\frac{\phi_1(\xi,x_{3<})\phi_2(\xi,x_{3>})}{\frac{\partial}{\partial \xi}W(\phi_1,\phi_2)\big|_{\xi=\xi_n}}H^{(1)}_0(\xi_n r)\notag\\[2mm]
&=&-\frac{i}{2}\sum_{n=1}^\infty\frac{\phi_1(\xi_n,x_{3<})c_n\phi_1(\xi_n,x_{3>})}{\frac{\partial}{\partial \xi}W(\phi_1,\phi_2)\big|_{\xi=\xi_n}}H^{(1)}_0(\xi_n r)\notag\\[2mm]
&=&-\frac{i}{2}\sum_{n=1}^\infty\frac{\Phi_n(x_3)\Phi_n(x_{s3})}{W_n(\phi_1,\phi_2)}H^{(1)}_0(\xi_n r), \label{eq:Greenfcn}
\end{eqnarray}
where
\begin{eqnarray}
&&\Phi_n(x_3)=\frac{\phi_1(\xi_n,x_3)}{\|\phi_1(\xi_n,\cdot)\|},\quad \|\phi_1(\xi_n,\cdot)\|^2=\langle \phi_1(\xi_n,\cdot),\phi_1(\xi_n,\cdot)\rangle:=\int_0^h\phi^2_1(\xi_n,x_3)dx_3,\label{eq:Phi}\\[2mm]
&&W_n(\phi_1,\phi_2)=\frac{1}{c_n\|\phi_1(\xi_n,\cdot)\|^2}\frac{\partial }{\partial\xi}W(\phi_1,\phi_2)|_{\xi=\xi_n}.\notag
\end{eqnarray}
Now we can formulate the outgoing radiation condition. Assume the refractive index is stratified
for large $|\tilde{x}|$, we have by separation of variables the following mode expansion of the propagating wave \cite{AK, Wil}:
\begin{equation}\label{eq:radicond}
p(\tilde{x},x_3)=\sum_{n=1}^\infty \Phi_n(x_3)u_n(\tilde{x}) \quad \text{for}\;\;\text{large}\;\; |\tilde{x}|,
\end{equation}
where $\Phi_n(x_3)$ is given by \eqref{eq:Phi}, and the mode expansion coefficient $u_n(\tilde{x})$ satisfies 
\begin{equation}
\lim_{|\tilde{x}|\to\infty}\Big(\frac{\partial u_n}{\partial |\tilde{x}|}-i\xi_nu_n\Big)=0\quad \text{for} \;\; n=1,2,\cdots\,.
\end{equation}

For the ease of observation, we show a slice of the wave propagation contour of a point source located at (0,0,50) in a three-layered waveguide $\mathbb{R}_h^3$ in Figure \ref{fig:wavepropagation}(a) and the corresponding 3D display is shown in Figure \ref{fig:wavepropagation}(b). 
The corresponding modes $\phi_1(\xi_n)$ with $n=1,2,\cdots,20$ are shown in Figure \ref{fig:mode}.

\begin{figurehere}
 \hfill{}\includegraphics[clip,width=0.08\textwidth]{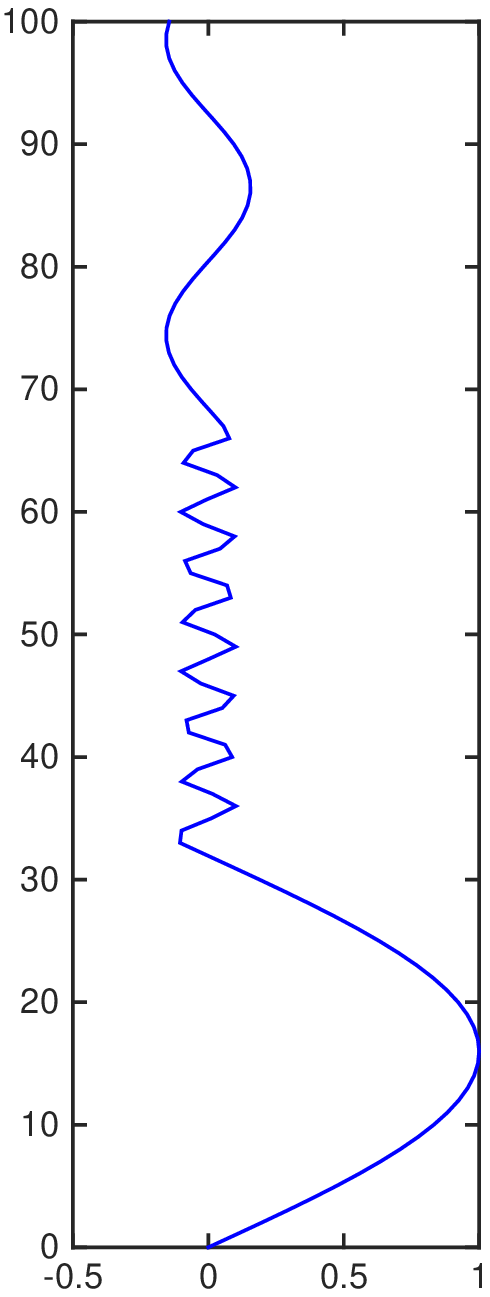}\hfill{}
 \hfill{}\includegraphics[clip,width=0.08\textwidth]{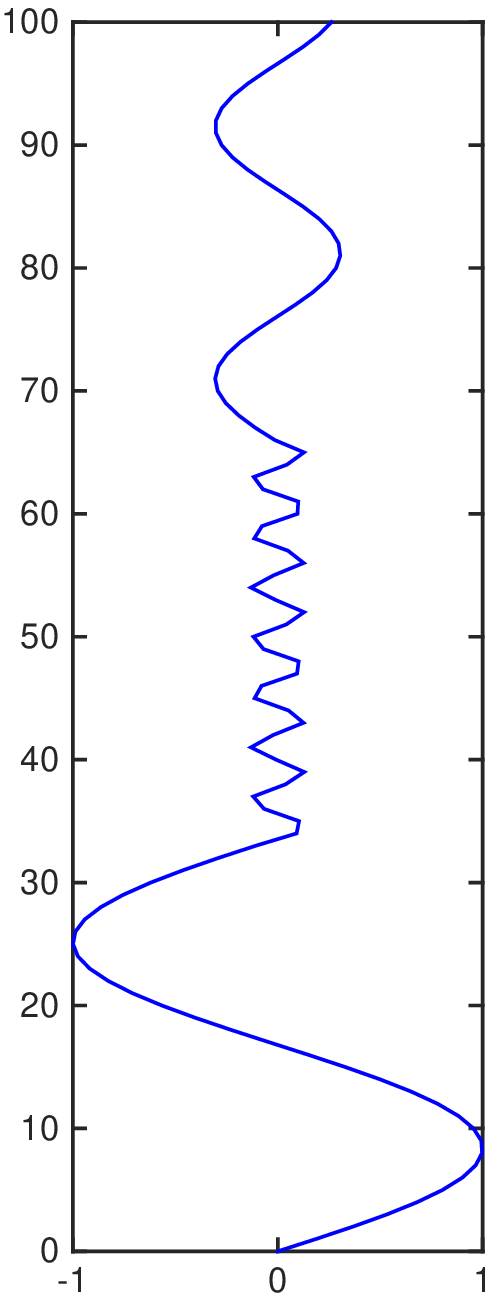}\hfill{}
 \hfill{}\includegraphics[clip,width=0.08\textwidth]{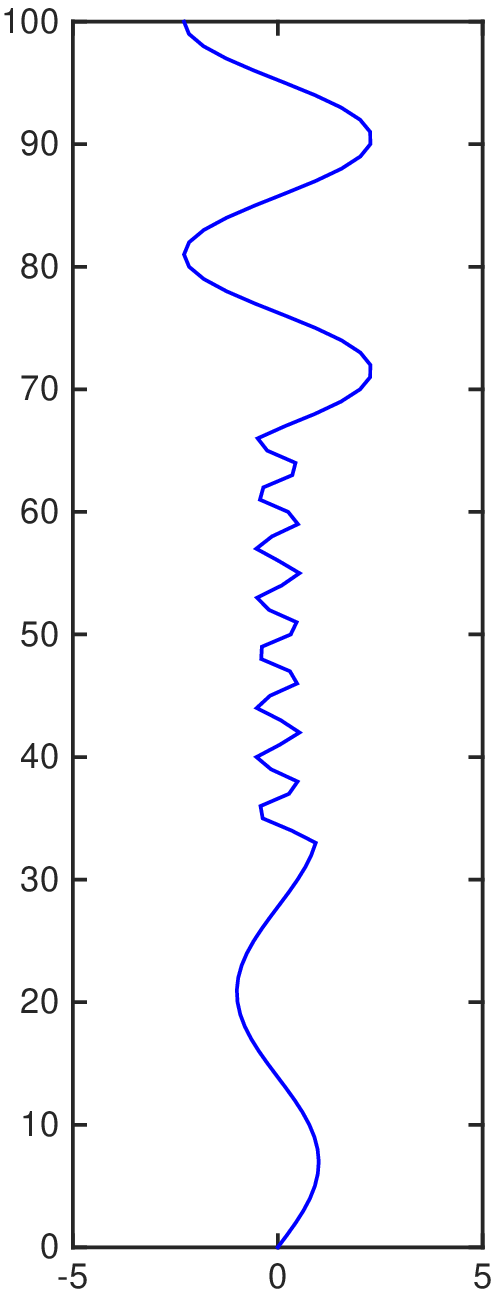}\hfill{}
 \hfill{}\includegraphics[clip,width=0.08\textwidth]{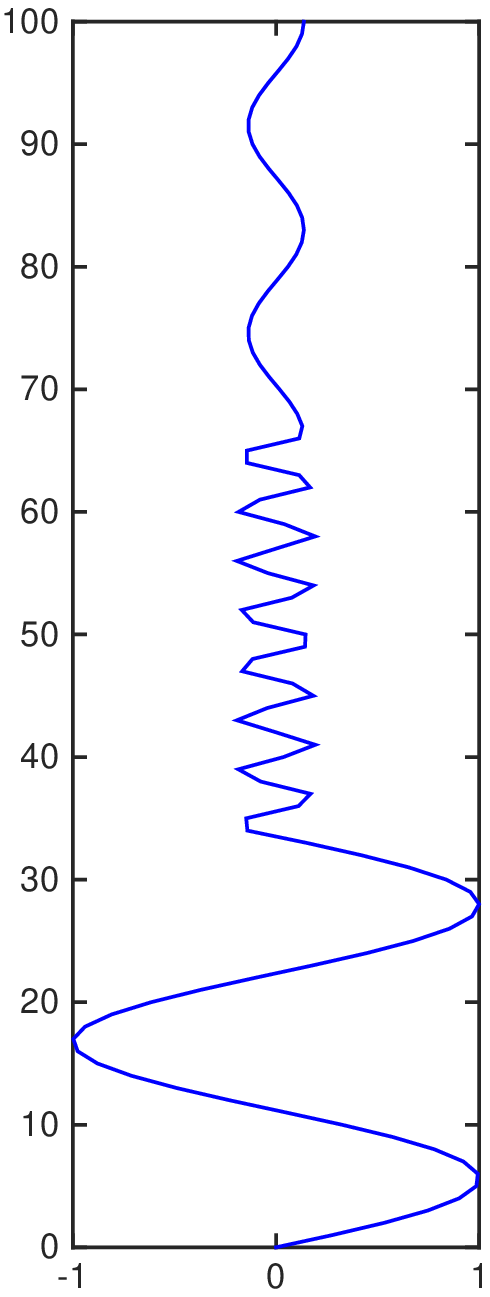}\hfill{}
 \hfill{}\includegraphics[clip,width=0.08\textwidth]{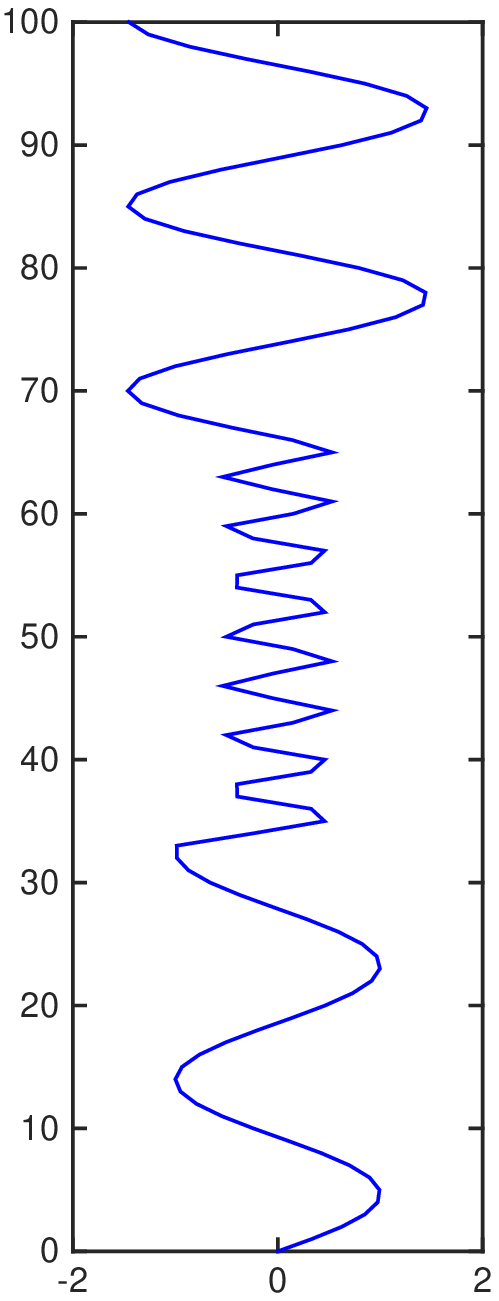}\hfill{}
 \hfill{}\includegraphics[clip,width=0.08\textwidth]{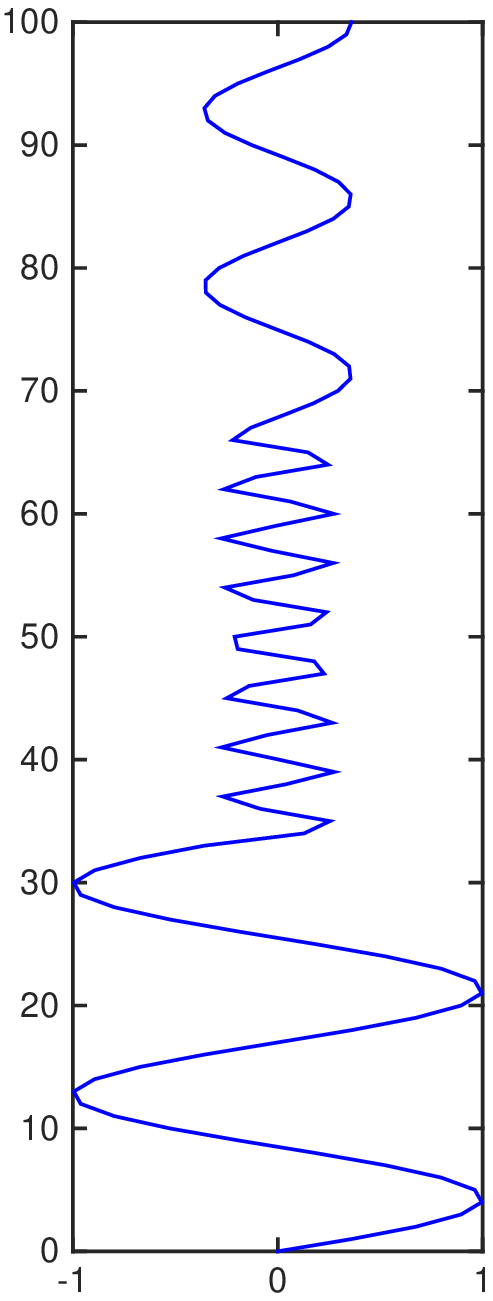}\hfill{}
  \hfill{}\includegraphics[clip,width=0.08\textwidth]{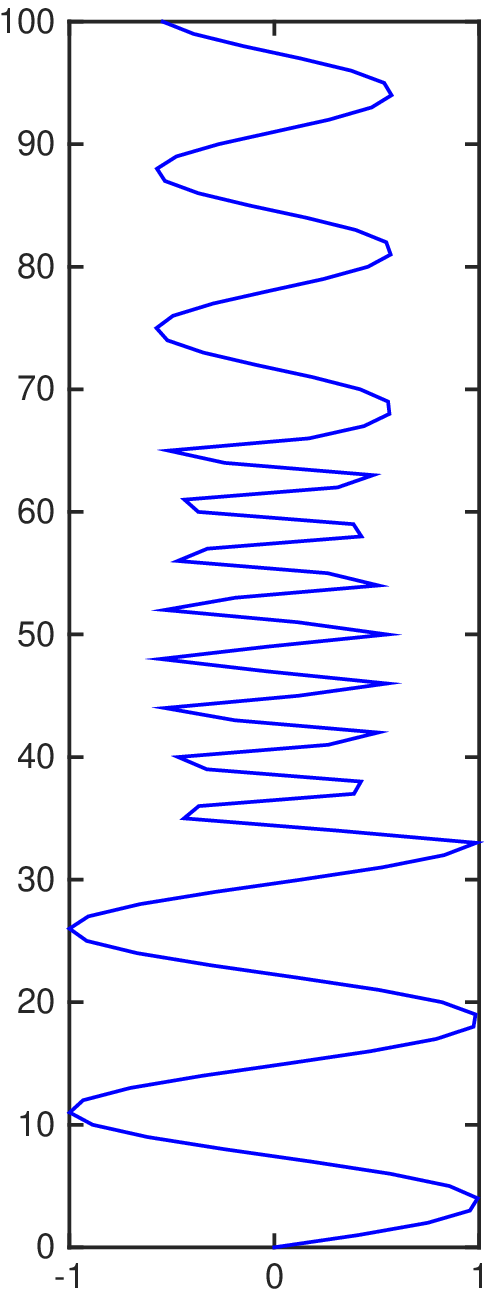}\hfill{}
 \hfill{}\includegraphics[clip,width=0.08\textwidth]{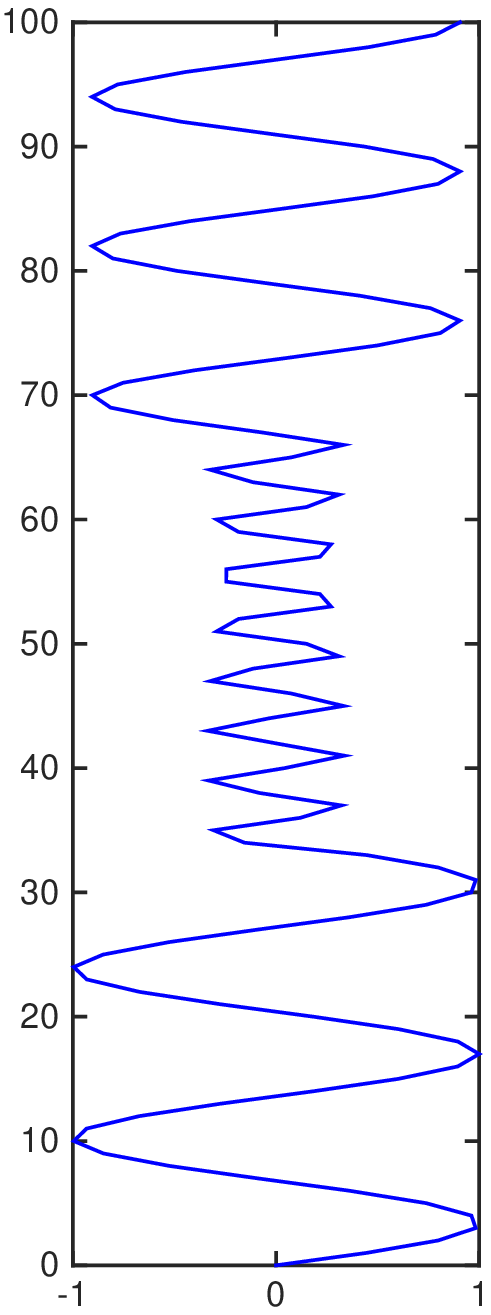}\hfill{}
 \hfill{}\includegraphics[clip,width=0.08\textwidth]{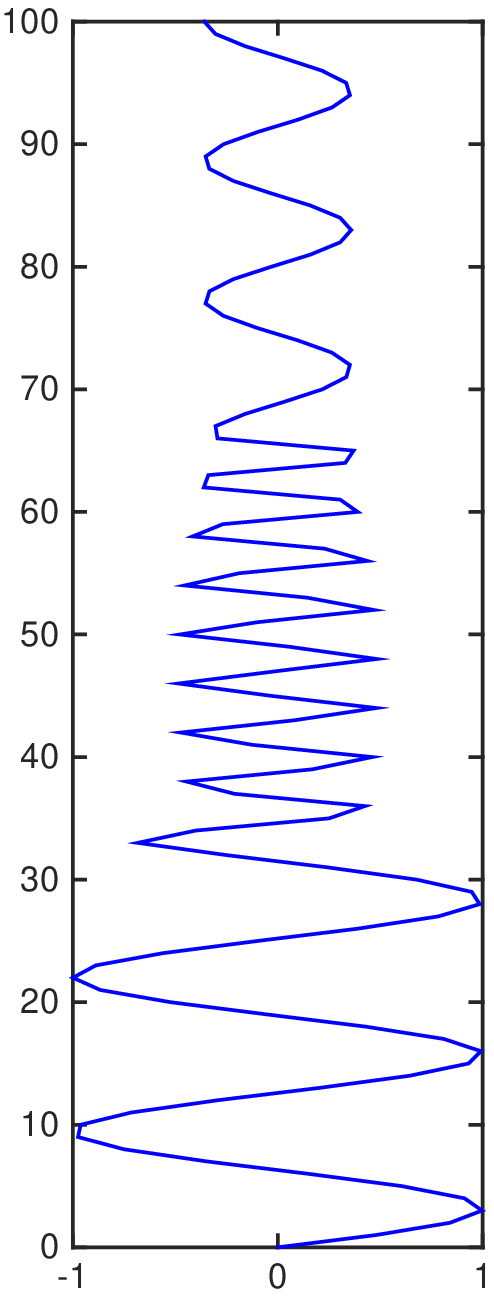}\hfill{}
 \hfill{}\includegraphics[clip,width=0.08\textwidth]{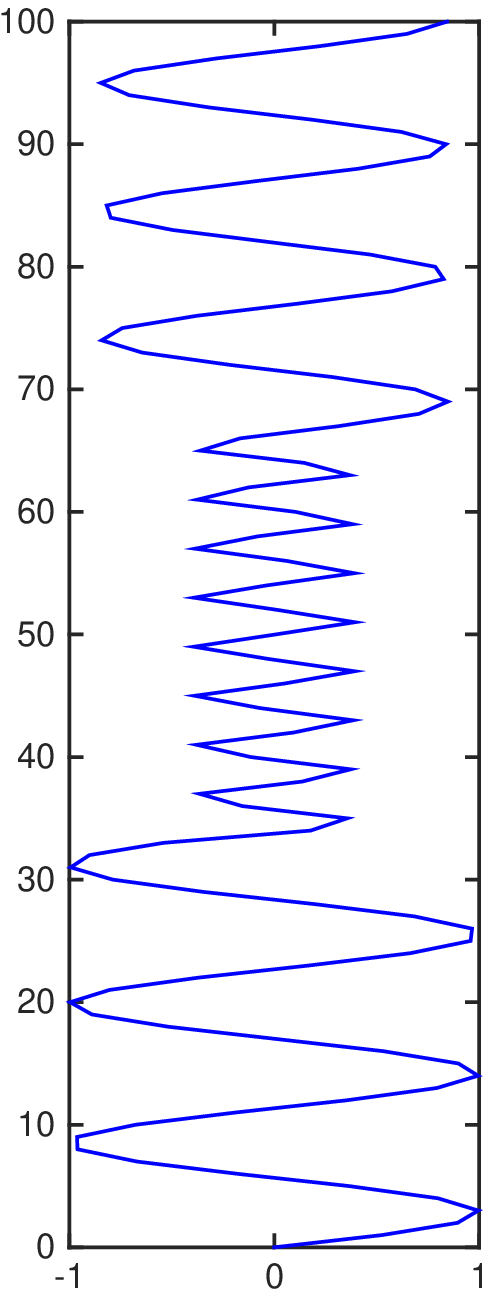}\hfill{}
 
 \hfill{}\includegraphics[clip,width=0.08\textwidth]{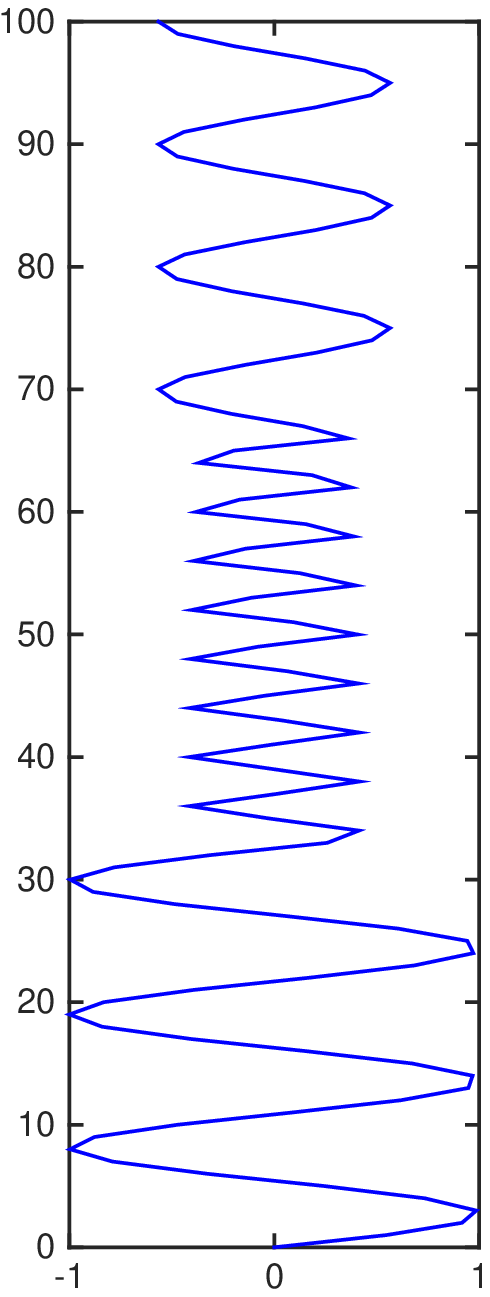}\hfill{}
 \hfill{}\includegraphics[clip,width=0.08\textwidth]{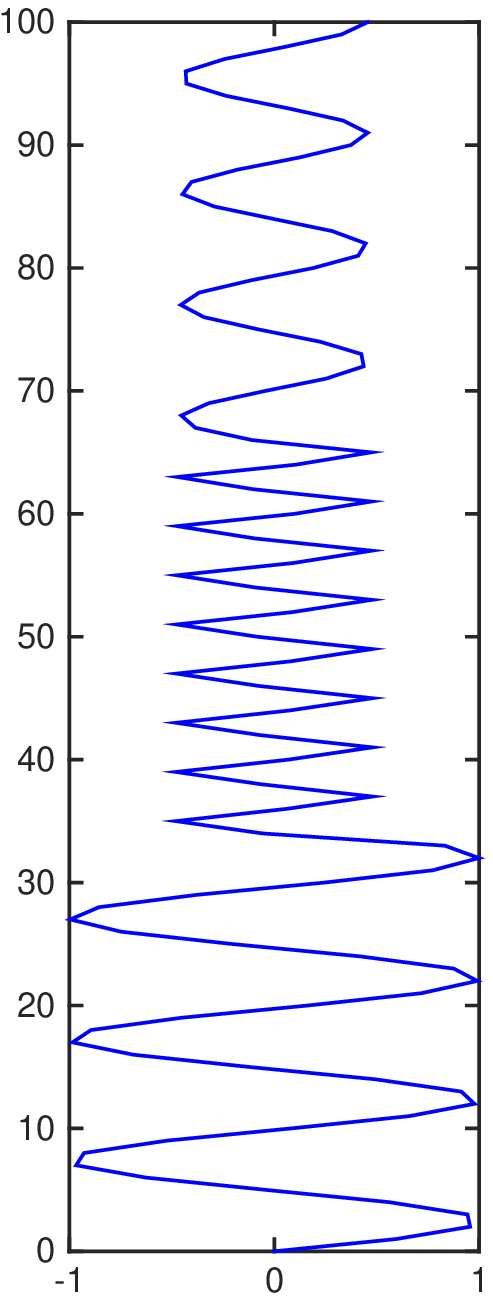}\hfill{}
 \hfill{}\includegraphics[clip,width=0.08\textwidth]{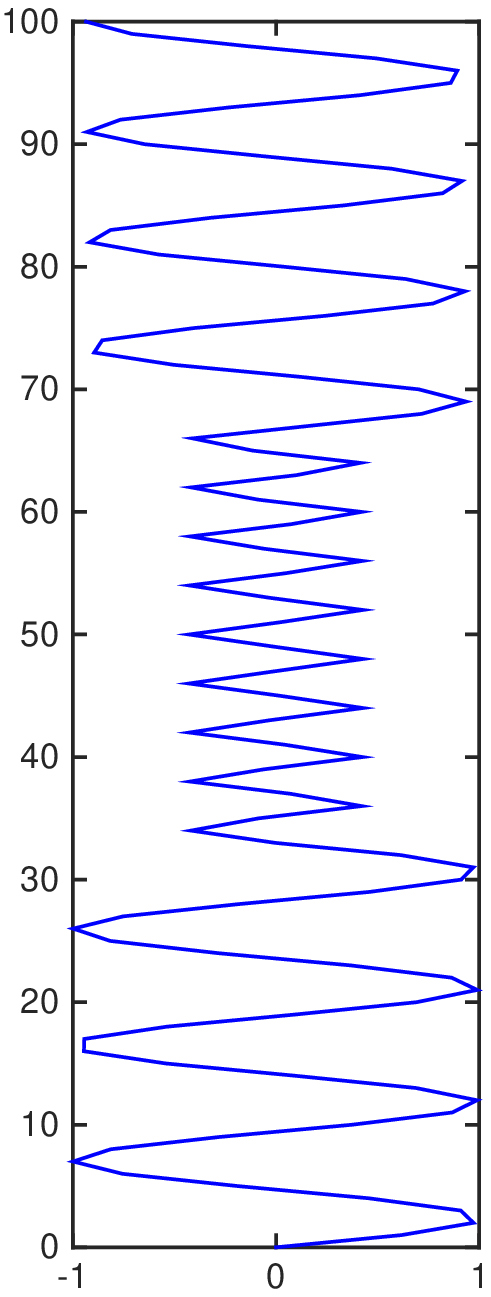}\hfill{}
 \hfill{}\includegraphics[clip,width=0.08\textwidth]{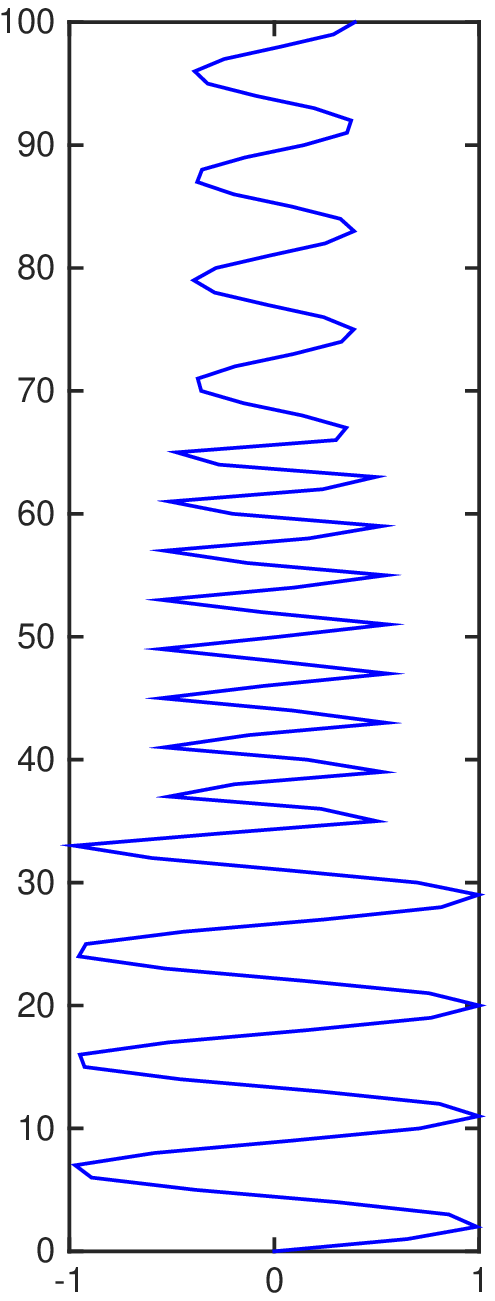}\hfill{}
 \hfill{}\includegraphics[clip,width=0.08\textwidth]{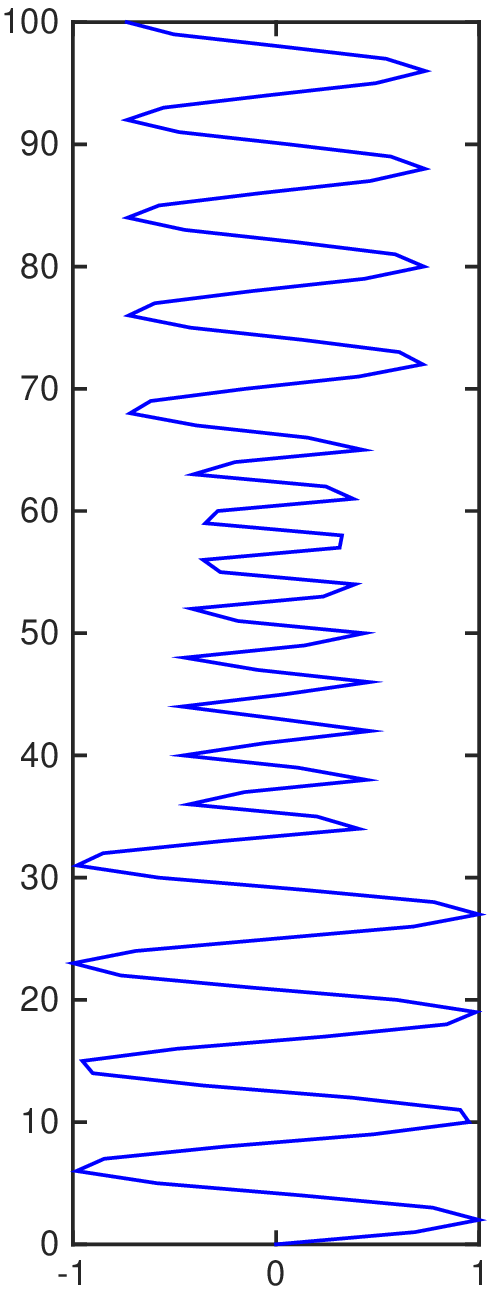}\hfill{}
 \hfill{}\includegraphics[clip,width=0.08\textwidth]{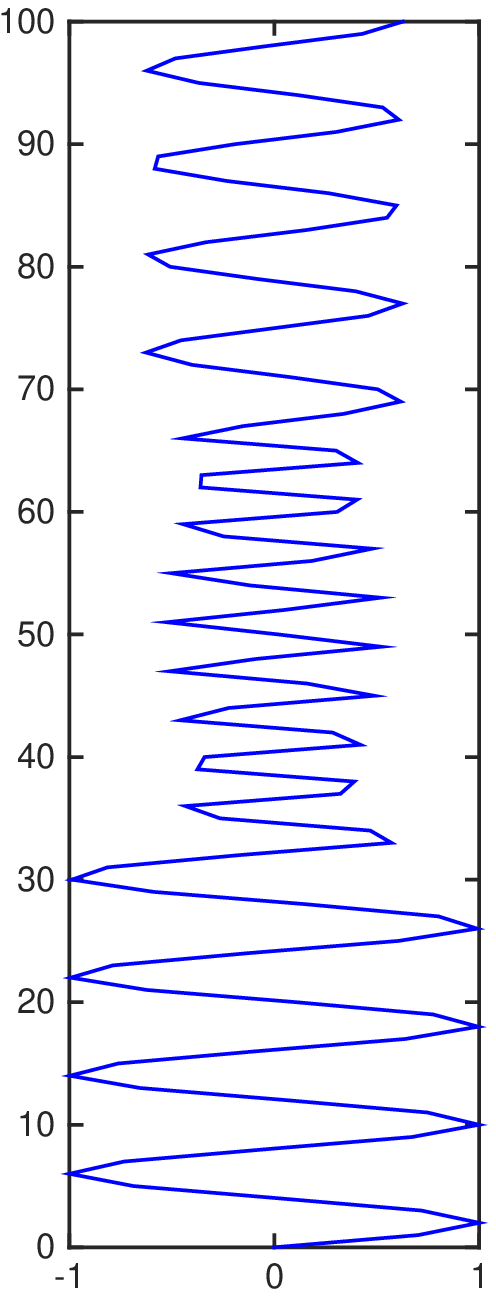}\hfill{}
  \hfill{}\includegraphics[clip,width=0.08\textwidth]{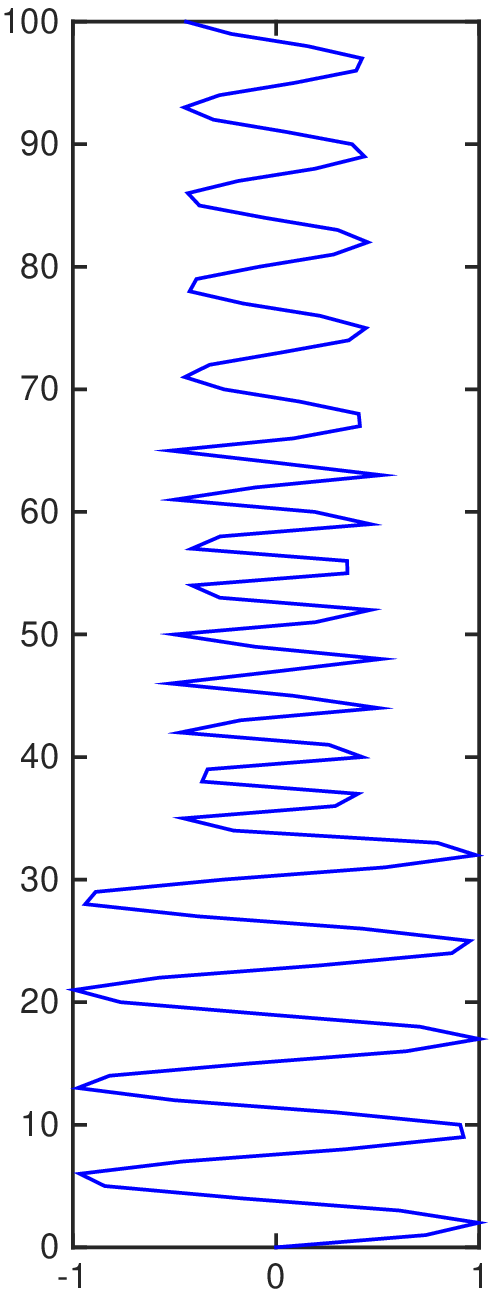}\hfill{}
 \hfill{}\includegraphics[clip,width=0.08\textwidth]{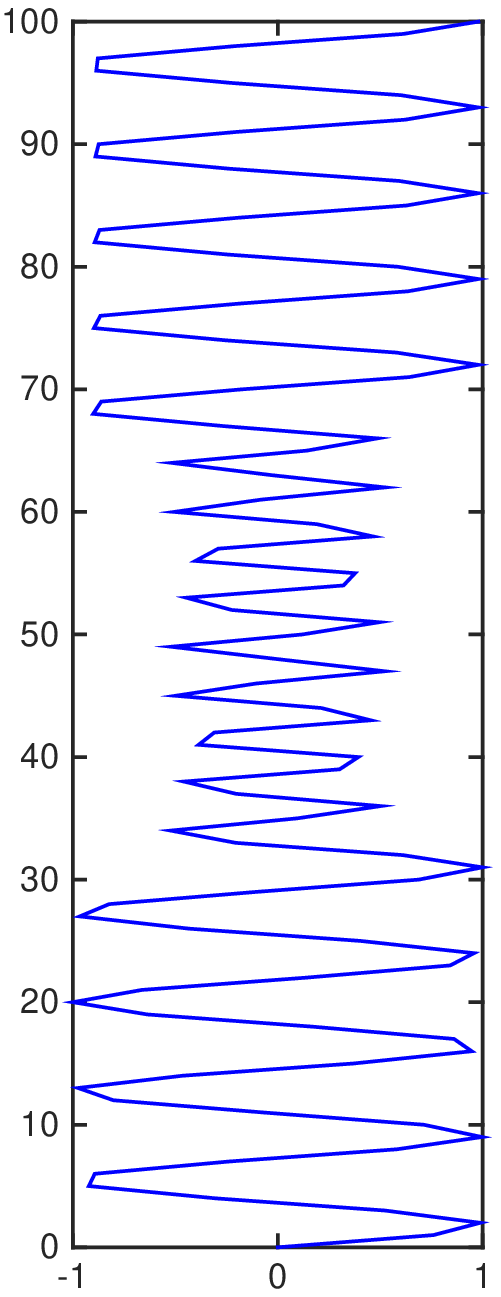}\hfill{}
 \hfill{}\includegraphics[clip,width=0.08\textwidth]{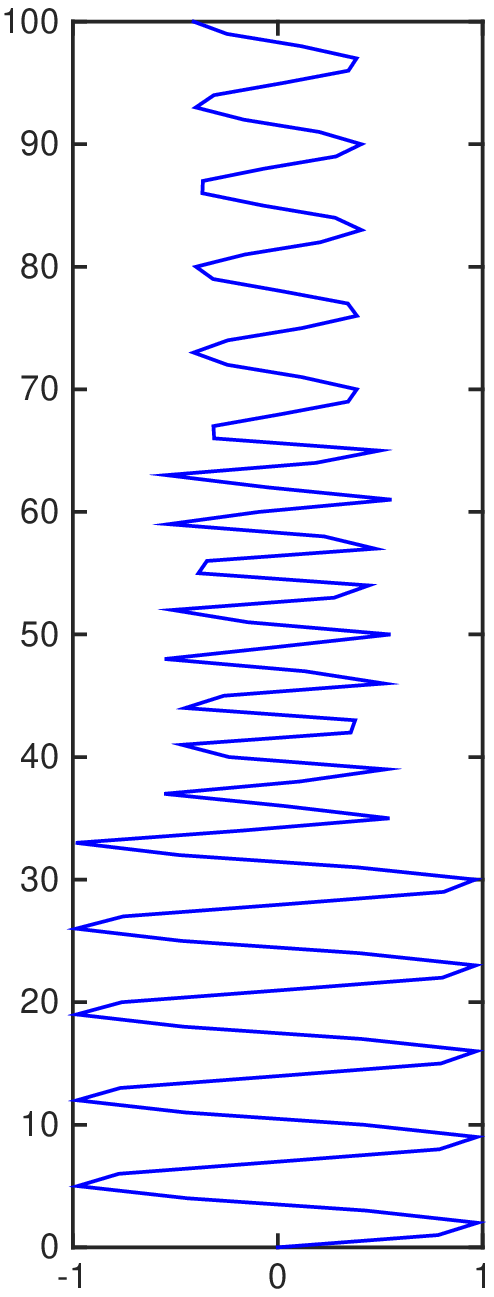}\hfill{}
 \hfill{}\includegraphics[clip,width=0.08\textwidth]{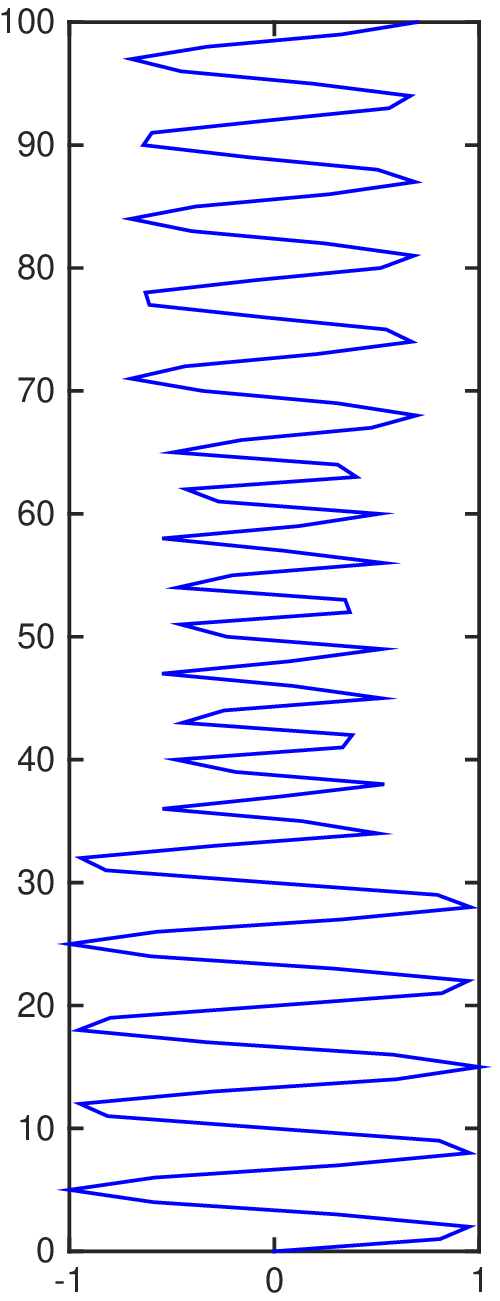}\hfill{}

 \caption{\label{fig:mode} \small{Modes $\phi_1(\xi_n)$ for $n=1,2,\cdots,20$.}}
 \end{figurehere}

\begin{figurehere}
	\hfill{}\includegraphics[clip,width=0.8\textwidth]{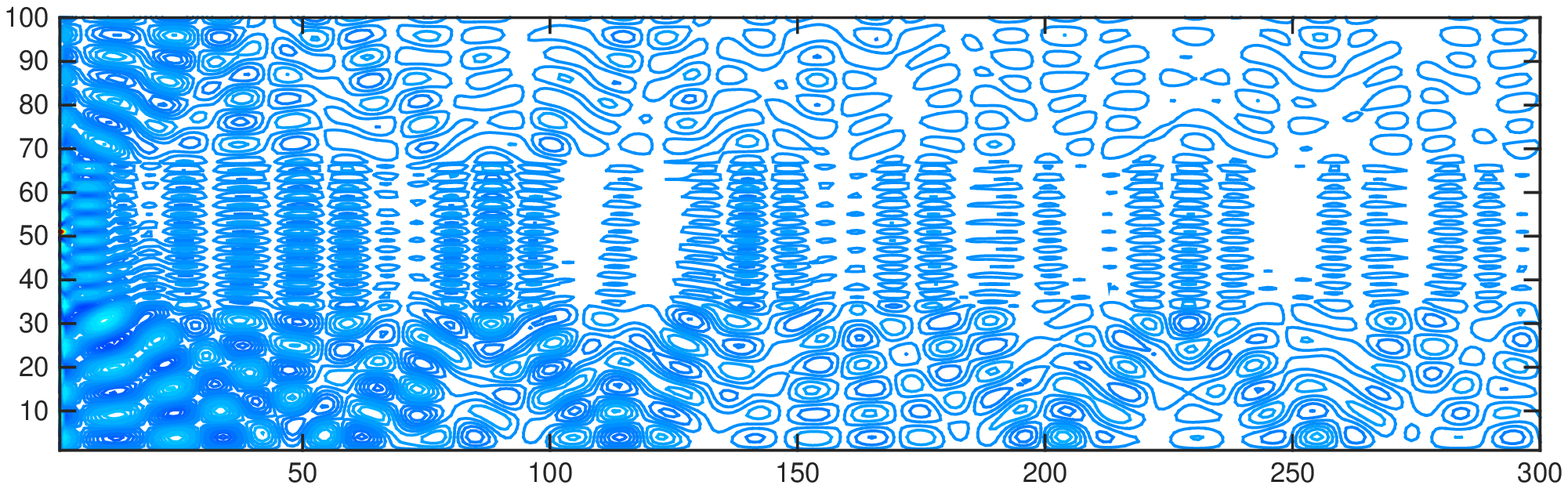}\hfill{}
	
	\hfill{}(a)\hfill{}
	
 	\hfill{}\includegraphics[clip,width=0.8\textwidth]{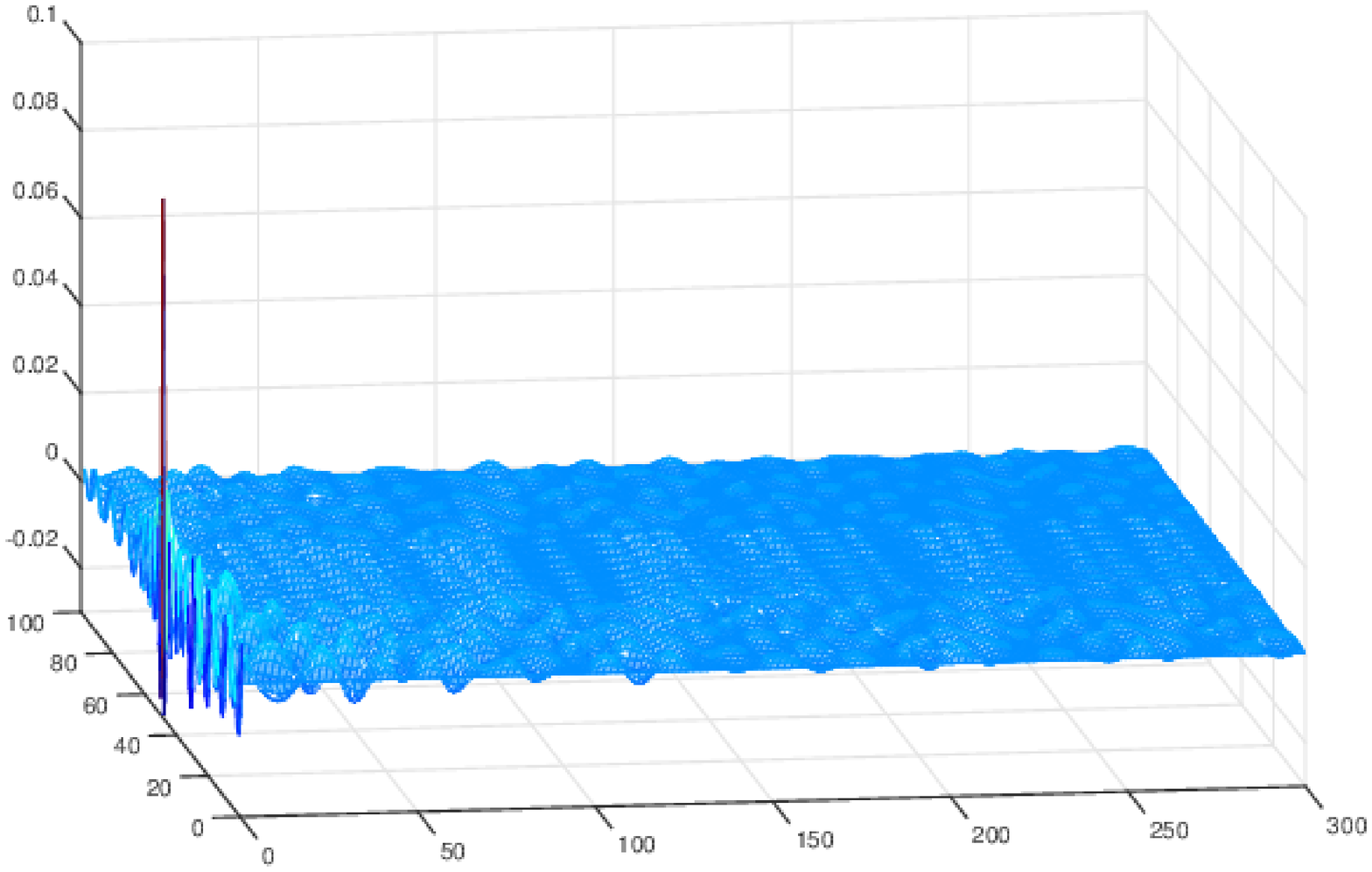}\hfill{}
	
 	 \hfill{}(b)\hfill{}
    \caption{\label{fig:wavepropagation}\emph{Wave propagation of a point source in a three-layered waveguide.}}
 \end{figurehere}

\section{Point source propagating sound wave in a three-layered waveguide with an inhomogeneous inclusion}\label{DS}

\ms
In this section, we shall consider the point source propagating in the three-layered waveguide $\mathbb{R}^3_h$, 
where there is one inhomogeneous inclusions $\Omega$ located 
inside, and 
the part of the waveguide that is not occupied by $\Omega$ is connected. 
The inclusion $\Omega$ may be an object that is known to exist in the waveguide region 
from some earlier exploration or construction, or represents a possible local perturbation of the refraction index 
that may be caused by some practical reasons.  
This three-dimensional three-layered waveguide model is clearly more realistic 
than the one considered in the literature; see \cite{GX4}, where only a two-layered waveguide in two dimesions
was considered. 
For pratical applications, this inhomogeneous inclusion $\Omega$ may have different geometric shapes and may 
be located in any of the three layers of the waveguide. For the sake of definiteness, 
we assume $\Omega$ lies in the layer 
$M_2$, and the geometry of the entire physical waveguide is shown in Figure \ref{fig:model2}. 
Corresponding to this three-layered stratified waveguide, the acoustic pressure field without convection is expressed as 
\begin{equation}
p(x)=\left\{
\begin{aligned}
&p_1(x), \quad & x&\in M_1\\[2mm]
&p_2(x), \quad & x&\in M_2\backslash \overline{\Omega}\\[2mm]
&p_3(x), \quad & x&\in M_3\\[2mm]
&p_4(x), \quad & x&\in \Omega
\end{aligned}
\right.
\end{equation}
which satisfies the outgoing radiation condition \eqref{eq:radicond} and the following 
piecewise Helmholtz system:
\begin{numcases}{}
\Delta p_1+k_1^2n^2_1p_1=-\delta(\tilde{x}-\tilde{x}_s)\delta(x_3-x_{s3})\quad  \text{in}\quad M_1,\label{eq:u1}\\[2mm]
\Delta p_2+k_2^2n^2_2p_2=0\quad  \text{in}\quad M_2\backslash \overline{\Omega},\label{eq:u2}\\[2mm]
\Delta p_3+k_3^2n^2_3p_3=0\quad  \text{in}\quad M_3,\label{eq:u3}\\[2mm]
\Delta p_4+k_4^2n^2_4(x)p_4=0\quad  \text{in}\quad \Omega,\label{eq:u4}\\[2mm]
\rho_1p_1=\rho_2p_2 \quad  \text{on}\quad \Gamma_1,\\[2mm]
\frac{\partial p_1}{\partial\nu}=\frac{\partial p_2}{\partial \nu} \quad  \text{on}\quad \Gamma_1,\\[2mm]
\rho_2p_2=\rho_3p_3 \quad  \text{on}\quad \Gamma_2,\\[2mm]
\frac{\partial p_2}{\partial\nu}=\frac{\partial p_3}{\partial \nu} \quad  \text{on}\quad \Gamma_2,\\[2mm]
p_1=0 \quad \text{on}\quad \Gamma^-,\\[2mm]
\frac{\partial p_3}{\partial x_3}=0 \quad \text{on}\quad \Gamma^+,\\[2mm]
\rho_2p_2=\rho_4p_4\quad \text{on}\quad \partial\Omega,\label{eq:u2u4Dir}\\[2mm]
\frac{\partial p_2}{\partial \nu}=\frac{\partial p_4}{\partial\nu}\quad \text{on}\quad \partial\Omega.\label{eq:u2u4Num}
\end{numcases}

For the ease of exposition, we write 
\begin{equation}\label{eq:q}
q^o(x)=\left\{
\begin{aligned}
&k_1n_1,&x\in M_1,\\[2mm]
&k_2n_2,&x\in M_2,\\[2mm]
&k_3n_3,&x\in M_3,
\end{aligned}
\right.
\qquad
\text{and}
\qquad
q(x)=\left\{
\begin{aligned}
&k_1n_1, &x\in& M_1,\\[2mm]
&k_2n_2, &x\in& M_2\backslash\Omega,\\[2mm]
&k_3n_3, &x\in& M_3,\\[2mm]
&k_4n_4(x), &x\in&\Omega.
\end{aligned}
\right.
\end{equation}

\begin{figurehere}
     \begin{center}
           \scalebox{0.6}{\includegraphics{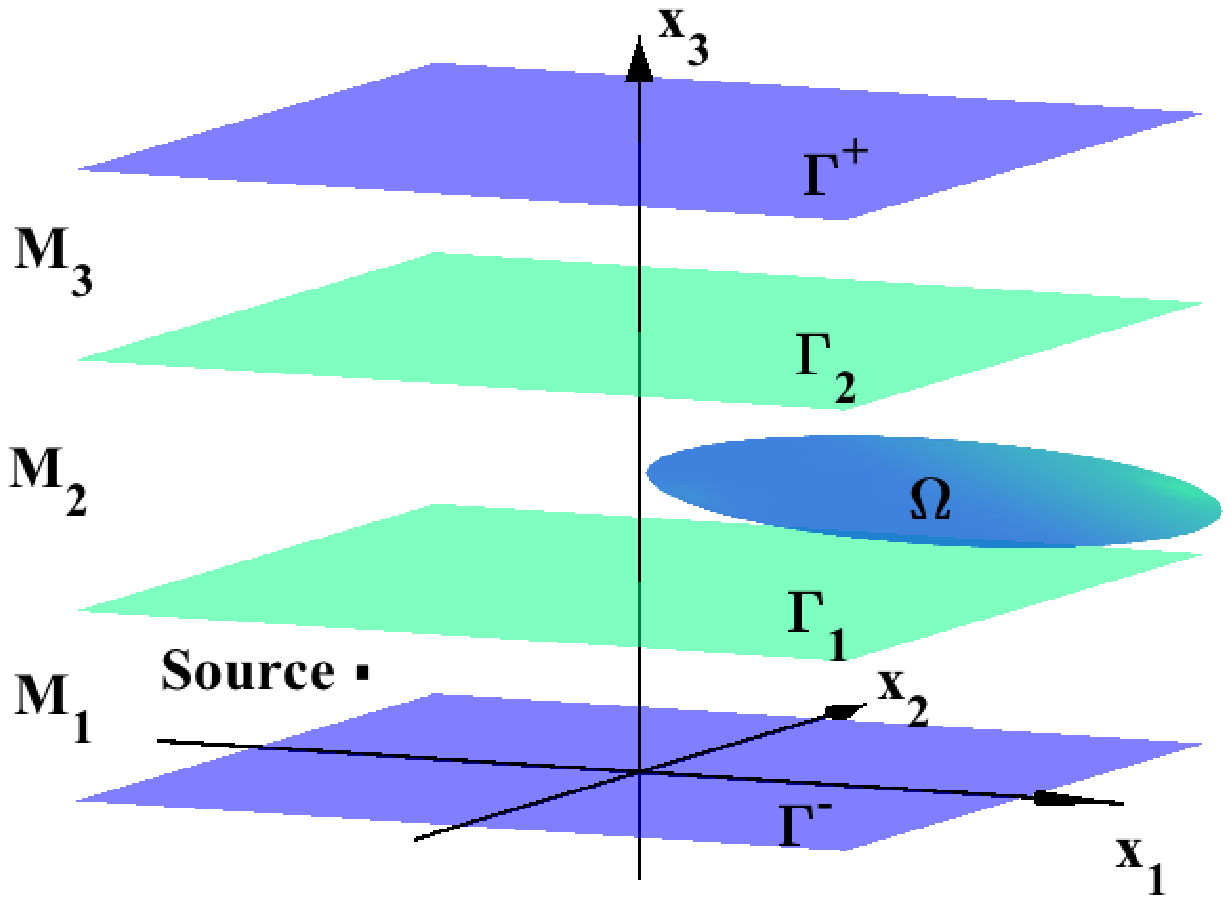}}\\
    \caption{\label{fig:model2}\emph{Illustration of an inhomogeneous inclusion embedded in the waveguide.}}
     \end{center}
 \end{figurehere}
 
Then we can rewrite \eqref{eq:u1}-\eqref{eq:u4} as
\[\Delta p+ (q^o)^2p=\Big[(q^o)^2-q^2\Big]p-\delta(\tilde{x}-\tilde{x}_s)\delta(x_3-x_{s3})\;\;\;\text{a.e.\;\;in\;\;}\;\mathbb{R}_h^3\,,\]
which can be simplified as
\begin{equation}
\label{eq:HelNew}
\Delta p+ (q^o)^2p=\tilde{q}p-\delta(\tilde{x}-\tilde{x}_s)\delta(x_3-x_{s3})\;\;\;\text{a.e.\;\;in\;\;}\;\mathbb{R}_h^3\,,
\end{equation}
where $\tilde{q}$ is given by 
\[\tilde{q}:=(q^o)^2-q^2=\left\{
\begin{aligned}
&0\,, &x\not\in\Omega\,,\\[2mm]
&k_2^2n^2_2-k_4^2n^2_4(x)\,, &x\in\Omega\,.
\end{aligned}
\right.\]

Assume $G(\tilde{\xi},\xi_3;x)$ is the Green's function of the three-layered waveguide with the acoustic point source 
situated at $(\tilde{\xi},\xi_3)$. Multiplying both sides of \eqref{eq:HelNew} by $G(\tilde{\xi},\xi_3;x)$ and integrating over $\Omega$, we have
\begin{eqnarray}
&&\int_\Omega G(\tilde{\xi},\xi_3;x)\Big[\Delta p(\tilde{\xi},\xi_3;x_s)+(q^o)^2p(\tilde{\xi},\xi_3;x_s)\Big]d\tilde{\xi}d\xi_3\notag\\[2mm]
&=&\int_\Omega G(\tilde{\xi},\xi_3;x)\Big[\tilde{q}p(\tilde{\xi},\xi_3;x_s)-\delta(\tilde{\xi}-\tilde{x}_s)\delta(\xi_3-x_{s3})\Big]d\tilde{\xi}d\xi_3\notag\\[2mm]
&=&\int_\Omega G(\tilde{\xi},\xi_3;x)\tilde{q}p(\tilde{\xi},\xi_3;x_s)\,d\tilde{\xi}d\xi_3\,.\label{eq:G_equal}
\end{eqnarray}
Now we are readily derive
\begin{equation}\label{eq:uOme}
p(\tilde{x},x_3;x_s)=-\int_\Omega G(\tilde{\xi},\xi_3;x)\tilde{q}p(\tilde{\xi},\xi_3;x_s)d\tilde{\xi}d\xi_3+\int_{\partial\Omega}(p_+-p_-)\frac{\partial G}{\partial\nu}ds+G(\tilde{x}_s,x_{s3};x) \quad \text{for}\quad x\in\mathbb{R}_h^3\,,
\end{equation}
where $p_+$ and $p_-$ are the limits of $p(\tilde{\xi},\xi_3;x_s)$ as $(\tilde{\xi},\xi_3)$ approaches $\partial \Omega$ from the exterior and interior of $\Omega$; we refer to \cite{GX4} for more detail.
If we introduce 
\begin{equation}\label{eq:phiOri}
\phi(\tilde{\xi},\xi_3)=p_+(\tilde{\xi},\xi_3;x_s)-p_-(\tilde{\xi},\xi_3;x_s)\,, 
\end{equation}
then we can rewrite \eqref{eq:uOme} as
\begin{equation}\label{eq:uRepresen}
p(\tilde{x},x_3;x_s)+\int_\Omega G(\tilde{\xi},\xi_3;x)\tilde{q}p(\tilde{\xi},\xi_3;x_s)d\tilde{\xi}d\xi_3-\int_{\partial\Omega}\phi(\tilde{\xi},\xi_3)\frac{\partial G}{\partial\nu}(\tilde{\xi},\xi_3;x)ds=G(\tilde{x}_s,x_{s3};x) \quad \text{for}\quad x\in\Omega.
\end{equation}
From the condition \eqref{eq:u2u4Dir}, we can compute as follows:
\begin{eqnarray*}
0&=&\rho_2 p_+(x;x_s)-\rho_4p_-(x;x_s)\\[2mm]
&=&\rho_2\Big[\frac{1}{2}\phi(x)+\int_{\partial\Omega}\phi(\tilde{\xi},\xi_3)\frac{\partial G}{\partial\nu}(\tilde{\xi},\xi_3;x)ds\Big]-\rho_4\Big[-\frac{1}{2}\phi(x)+\int_{\partial\Omega}\phi(\tilde{\xi},\xi_3)\frac{\partial G}{\partial\nu}(\tilde{\xi},\xi_3;x)ds\Big]\\[2mm]
&&+(\rho_2-\rho_4)\Big[G(\tilde{x}_s,x_{s3};x)-\int_\Omega G(\tilde{\xi},\xi_3;x)\tilde{q}p(\tilde{\xi},\xi_3;x_s)d\tilde{\xi}d\xi_3\Big] \quad \text{for}\quad x\in\partial\Omega\,,
\end{eqnarray*}
which implies 
\begin{eqnarray}\label{eq:phi}
&&\phi(x)+\frac{2(\rho_2-\rho_4)}{\rho_2+\rho_4}\int_{\partial\Omega}\phi(\tilde{\xi},\xi_3)\frac{\partial G}{\partial\nu}(\tilde{\xi},\xi_3;x)ds-\frac{2(\rho_2-\rho_4)}{\rho_2+\rho_4}\int_\Omega G(\tilde{\xi},\xi_3;x)\tilde{q}p(\tilde{\xi},\xi_3)d\tilde{\xi}d\xi_3\notag\\[2mm]
&=&-\frac{2(\rho_2-\rho_4)}{\rho_2+\rho_4}G(x_s;x)\quad \text{for}\quad x\in\partial\Omega\,,
\end{eqnarray}
where we have applied the facts that
\[G(\tilde{x}_s,x_{s3};x)-\int_\Omega G(\tilde{\xi},\xi_3;x)\tilde{q}p(\tilde{\xi},\xi_3;x_s)d\tilde{\xi}d\xi_3\]
is continuous across $\partial\Omega$ and
\[v(\tilde{x},x_3)=\int_{\partial\Omega}\phi(\tilde{\xi},\xi_3)\frac{\partial G}{\partial\nu}(\tilde{\xi},\xi_3;x)ds\]
satisfies the following jump conditions when $x\in\partial\Omega$,
\begin{eqnarray*}
v^+(x)&=&\frac{1}{2}\phi(x)+\int_{\partial\Omega}\phi(\tilde{\xi},\xi_3)\frac{\partial G}{\partial\nu}(\tilde{\xi},\xi_3;x)d\tilde{\xi}d\xi_3,\\[2mm]
v^+(x)&=&-\frac{1}{2}\phi(x)+\int_{\partial\Omega}\phi(\tilde{\xi},\xi_3)\frac{\partial G}{\partial\nu}(\tilde{\xi},\xi_3;x)d\tilde{\xi}d\xi_3,\\[2mm]\phi(x)&=&v^+(x)-v^-(x).
\end{eqnarray*}

For the system of integral equations \eqref{eq:uRepresen} and \eqref{eq:phi}, we have the following result. 
\begin{Theorem}
If $\max_{x\in \Omega} |\tilde{q}|$ and $|\rho_2-\rho_4|$ are sufficiently small, then the system of integral equations \eqref{eq:uRepresen} and \eqref{eq:phi} has a unique solution.
\end{Theorem}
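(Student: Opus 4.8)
The plan is to recast the coupled system \eqref{eq:uRepresen}--\eqref{eq:phi} as a single operator equation $(I+\mathcal{B})(p,\phi)^{\top}=(g_1,g_2)^{\top}$ on the product space $X:=L^2(\Omega)\times L^2(\partial\Omega)$ (one may equally well work in $C^{0,\alpha}(\overline\Omega)\times C^{0,\alpha}(\partial\Omega)$, which matches the jump relations used above, with no change in the argument), and to show that $\mathcal{B}$ becomes a strict contraction once $\max_{\Omega}|\tilde q|$ and $|\rho_2-\rho_4|$ are small; a Neumann series / Banach fixed point argument then yields existence and uniqueness simultaneously. Writing $\eta:=\tfrac{2(\rho_2-\rho_4)}{\rho_2+\rho_4}$, the system is
\[
\begin{pmatrix} I & 0\\ 0 & I\end{pmatrix}\!\begin{pmatrix}p\\ \phi\end{pmatrix}+\begin{pmatrix} V_\Omega & -L_\Omega\\ -\eta\,V_{\partial\Omega} & \eta K\end{pmatrix}\!\begin{pmatrix}p\\ \phi\end{pmatrix}=\begin{pmatrix} G(x_s;\cdot)\\ -\eta\,G(x_s;\cdot)\end{pmatrix},
\]
where $V_\Omega p(x)=\int_\Omega G(\tilde\xi,\xi_3;x)\tilde q\,p\,d\tilde\xi d\xi_3$ for $x\in\Omega$, $V_{\partial\Omega}$ is the same volume integral evaluated for $x\in\partial\Omega$, $L_\Omega\phi(x)=\int_{\partial\Omega}\phi\,\partial_\nu G\,ds$ for $x\in\Omega$, and $K\phi(x)=\int_{\partial\Omega}\phi\,\partial_\nu G\,ds$ for $x\in\partial\Omega$ is the double-layer operator; the right-hand sides lie in $X$ since $x_s\in M_1$ is separated from $\overline\Omega\subset M_2$.

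First I would record the boundedness of these four operators on the appropriate spaces. Since the waveguide Green's function $G$ differs from the free-space fundamental solution only by a smooth remainder -- its singularity as $x\to y$ is the usual $O(|x-y|^{-1})$ coming from the principal symbol, compare \eqref{eq:Greenfcn} together with the small-argument behaviour of $H_0^{(1)}$ -- the kernel $G$ is weakly singular and $\partial_\nu G$ is the classical double-layer kernel. Hence, by standard volume/layer potential estimates, $V_\Omega:L^2(\Omega)\to L^2(\Omega)$, $V_{\partial\Omega}:L^2(\Omega)\to L^2(\partial\Omega)$, $L_\Omega:L^2(\partial\Omega)\to L^2(\Omega)$ and $K:L^2(\partial\Omega)\to L^2(\partial\Omega)$ are all bounded, with norms depending only on $\Omega$ and the background parameters $k_i,n_i,\rho_i$; moreover $\|V_\Omega\|\le C\max_\Omega|\tilde q|$ and $\|V_{\partial\Omega}\|\le C\max_\Omega|\tilde q|$. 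Thus only these last two norms carry the smallness of $\tilde q$, while the prefactor $\eta$ in front of $V_{\partial\Omega}$ and $K$ carries the smallness of $\rho_2-\rho_4$.

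The main obstacle is that the coupling block $-L_\Omega$ in the first row is \emph{not} scaled by either small parameter, so $\mathcal{B}$ has $O(1)$ norm in the plain product norm and a naive Neumann series fails. The remedy is to use the weighted norm $\|(p,\phi)\|_*:=\|p\|_{L^2(\Omega)}+\mu\,\|\phi\|_{L^2(\partial\Omega)}$, exploiting that the \emph{entire} second equation is multiplied by $\eta$, which forces $\phi$ to be of order $\eta$; equivalently, one may invert $I+\eta K$ for small $\eta$, solve the second equation for $\phi=\eta(I+\eta K)^{-1}\big(V_{\partial\Omega}p-G(x_s;\cdot)\big)$, and substitute into the first, reducing the problem to $(I+V_\Omega+\eta\widetilde W)p=\widetilde g$ with $\|\widetilde W\|$ bounded independently of the small parameters. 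With the weighted norm I would estimate
\[
\|\mathcal{B}(p,\phi)\|_*\le\max\!\Big(\|V_\Omega\|+\mu|\eta|\,\|V_{\partial\Omega}\|,\ \tfrac{\|L_\Omega\|}{\mu}+|\eta|\,\|K\|\Big)\,\|(p,\phi)\|_*,
\]
then choose $\mu:=4\|L_\Omega\|$ so that $\|L_\Omega\|/\mu=1/4$, and finally require $|\eta|<\tfrac{3}{8\|K\|}$ (which holds when $|\rho_2-\rho_4|$ is small) and $\max_\Omega|\tilde q|$ small enough that $\|V_\Omega\|+4\|L_\Omega\|\,|\eta|\,\|V_{\partial\Omega}\|<1$. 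Both entries of the $\max$ are then strictly below $1$, so $\mathcal{B}$ is a contraction on $(X,\|\cdot\|_*)$, $I+\mathcal{B}$ is boundedly invertible via $\sum_{k\ge0}(-\mathcal{B})^k$, and \eqref{eq:uRepresen}--\eqref{eq:phi} has a unique solution $(p,\phi)\in X$; the same estimates run verbatim in the Hölder setting. The only points needing care are the precise mapping property of $L_\Omega$ near $\partial\Omega$ (the double-layer potential of an $L^2$ density stays in $L^2(\Omega)$) and the uniformity of the weak singularity of $G$ on the region occupied by $\Omega$, both of which are standard.
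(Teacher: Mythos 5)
Your proposal is correct, and it packages the argument differently from the paper. The paper's proof is a sequential elimination: it writes \eqref{eq:uRepresen} as $p+\mathbf{Q}p=f$, where $\mathbf{Q}$ is the volume potential weighted by $\tilde q$ and $f$ is affine in $\phi$, inverts $\mathbf{I}+\mathbf{Q}$ in $L^2(\Omega)$ for $\max_{\Omega}|\tilde q|$ small, substitutes $p=(\mathbf{I}+\mathbf{Q})^{-1}f$ into \eqref{eq:phi}, and then notes that in the resulting equation \eqref{eq:phi1} every $\phi$-dependent term other than the identity carries the factor $\eta=2(\rho_2-\rho_4)/(\rho_2+\rho_4)$, so a second Neumann-series step for $|\rho_2-\rho_4|$ small gives a unique $\phi\in L^2(\partial\Omega)$ and hence a unique $p$. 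You instead keep the system coupled as a $2\times 2$ block operator on $L^2(\Omega)\times L^2(\partial\Omega)$ and neutralize the only block that is not scaled by a small parameter, the boundary-to-volume double layer $L_\Omega$, by a weighted norm; your parenthetical alternative (invert $I+\eta K$, eliminate $\phi$, reduce to $(I+V_\Omega+\eta\widetilde W)p=\widetilde g$) is exactly the mirror image of the paper's elimination order. In substance both proofs use the same two facts: boundedness of the four potential operators with $\|V_\Omega\|,\ \|V_{\partial\Omega}\|\le C\,\max_{\Omega}|\tilde q|$, and the global factor $\eta$ on the boundary equation. What your version buys is a single fixed-point statement giving existence and uniqueness simultaneously, explicit and checkable smallness thresholds (the choice $\mu=4\|L_\Omega\|$, the bound on $|\eta|$), and the mapping-property justifications (weak singularity of $G$ near the diagonal, $L^2$ bounds for volume and double-layer potentials) that the paper simply asserts; the paper's version is shorter and produces the reduced equation for $\phi$ explicitly. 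One small point worth making explicit in your write-up: the bound $\|V_{\partial\Omega}\|\le C\,\max_{\Omega}|\tilde q|$ (volume potential evaluated on $\partial\Omega$, mapping $L^2(\Omega)\to L^2(\partial\Omega)$) follows from a Schur-type test on the weakly singular kernel and deserves the same one-line justification you give for $V_\Omega$, since it is what makes the first entry of your maximum small.
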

\begin{proof} We first represent \eqref{eq:uRepresen} into the following form
\begin{equation}\label{eq:operator_u}
p+\mathbf{Q}p=f,
\end{equation}
where $Q$ and $f$ are given by 
\[\mathbf{Q}p(x;x_s)=\int_\Omega G(\tilde{\xi},\xi_3;x)\tilde{q}p(\tilde{\xi},\xi_3;x_s)d\tilde{\xi}d\xi_3\]
and
\[f(x)=\int_{\partial\Omega}\phi(\tilde{\xi},\xi_3)\frac{\partial G}{\partial\nu}(\tilde{\xi},\xi_3;x)ds+G(\tilde{x}_s,x_{s3};x).\]
If $\max_{x\in \Omega} |\tilde{q}|$ is sufficiently small, then the operator $\mathbf{I+Q}$ has a bounded inverse 
$(\mathbf{I+Q})^{-1}$, where $\mathbf{I}$ denotes the identical operator in $L^2(\Omega)$. We can obtain the following equation by substituting $u=(\mathbf{I+Q})^{-1}f$ into \eqref{eq:phi}, 
\begin{equation}\label{eq:phi1}
\phi(x)+\frac{2(\rho_2-\rho_4)}{\rho_2+\rho_4}\mathbf{P}\phi(x)-\frac{2(\rho_2-\rho_4)}{\rho_2+\rho_4}\mathbf{Q}\circ(\mathbf{I+Q})^{-1}f(x)=-\frac{2(\rho_2-\rho_4)}{\rho_2+\rho_4}G(x_s;x),
\end{equation}
where $\mathbf{P}$ is given by 
\[\mathbf{P}\phi(x)=\int_{\partial\Omega}\phi(\xi)\frac{\partial G}{\partial\nu}(\xi;x)ds.\]
The operators $\mathbf{P}$ and $\mathbf{Q}\circ(\mathbf{I+Q})^{-1}$ are respectively bounded in $L^2(\partial\Omega)$ and $L^2(\Omega)$. Consequently, \eqref{eq:phi1} has a unique solution $\phi\in L^2(\partial\Omega)$ when $|\rho_2-\rho_4|$ is small enough. Substituting this $\phi$ into \eqref{eq:operator_u}, we then obtain the unique solution $u$.
\end{proof}

For the case that $\rho_2=\rho_4$, we can easily see from \eqref{eq:phiOri} that 
\[\phi(\tilde{\xi},\xi_3)=0,\quad (\tilde{\xi},\xi_3)\in\partial\Omega\,.\]
The system of the integral equations \eqref{eq:uRepresen} and \eqref{eq:phi} 
reduce to a single integral equation
\begin{equation}\label{eq:uSimply}
p(x;x_s)=-\int_\Omega G(\tilde{\xi},\xi_3;x)\tilde{q}p(\tilde{\xi},\xi_3;x_s)d\tilde{\xi}d\xi_3+G(x;x_s) \quad \text{for}\quad x\in\Omega.
\end{equation}

For our subsequent numerical reconstruction of the point source, we need to compute the observation data. 
For this purpose, 
we propose an iterative method to solve the integral equation \eqref{eq:uSimply}. 
Choosing the initial guess $p^{(1)}(\tilde{\xi},\xi_3;x_s)=G(\tilde{\xi},\xi_3;x_s)$ for $(\tilde{\xi},\xi_3)\in\Omega$, then
for $n\ge 1$ we can generate a sequence of the approximations to $p(x)$ by 
\begin{equation}\label{eq:uiter}
p^{(n+1)}(x;x_s)=G(x;x_s)-\int_\Omega G(\tilde{\xi},\xi_3;x)\tilde{q}\,p^{(n)}(\tilde{\xi},\xi_3;x_s)d\tilde{\xi}d\xi_3\,,
\quad x\in\Omega\,.
\end{equation}

Once the total field $p(x)$ or its approximation is available for $x\in \Omega$, the scattered field $p^s$ can be calculated by
\begin{equation}\label{eq:u_scattered}
p^s(x^r;x_s)=k^2\int_\Omega G(x;x^r)\tilde{q}\,p(x;x_s)dx,\quad x^r\in\Gamma_r,
\end{equation}
where $\Gamma_r$ is the location of the receivers, and $x_s$ is the source location.

\section{A multilevel sampling method for locating an unknown acoustic source
} \label{MSM}

As we recall, the inverse problem of our interest is to determine the location of a time-harmonic sound source in a three-layered stratified waveguide. In this section, we present a multilevel sampling method for solving the inverse source problem.

Locating acoustic sources in waveguides has been widely studied, 
say, \cite{ba, bu, pd, sh1, sh2}. One of the
significant methods is the ``matched-field processing''
method which was proposed in 1976 in \cite{bu}.  
In the early 1990's, the matched-field method was combined with 
the scattering theory in a shallow ocean (ref. \cite{GX1, GX2, xp, XU1} ) to locate an 
acoustic source in a two-dimensional waveguide, see \cite{XY1, XY2}.

However, there remains a concern when we apply the method to locate 
an acoustic source in a three-dimensional waveguide 
in the inverse scattering theory of shallow ocean, particularly from the computational 
aspect. That is, the separation of source and sampling points
in the scattering formula is no longer clear due to an inhomogeneous inclusion in the waveguide.
This leads to a large computational burden. 

Next, we shall present a new matched-field indicator and an effective multilevel numerical algorithm to locate the unknown sound source.
We first define a matched-field indicator. 
Let $\{p^s_m\}$ be the detected data set consisting of the scattered field $p^s_{m}(x_m^r;x_s)$ sampled 
at each receiver situated at $x^r_m$, where $m=1,2,\cdots,M$. We construct the following index function:
\begin{equation}\label{eq:index1}
I(x):=\bigg\{\sum_{m=1}^M \Big|p^s(x_m^r;x)-p^s_{m}(x_m^r;x_s)\Big|^2\bigg\}^{-1}\quad \text{for} \quad x\in D,
\end{equation}
where $p^s(x^r_m;x)$ is the computed scattered field of each sampling source point $x$ and computed by
\eqref{eq:uiter} and \eqref{eq:u_scattered}, and $D$ is the sampling region. 
In practice, we normalize the above index function as
\begin{equation}\label{eq:index}
I(x):=\frac{I(x)}{\displaystyle\max_{x\in D} I(x)},\quad  x\in D.
\end{equation}

It is easy to note that the index function $I(x)$ close to 1 when the sampling point $x$ is near to the location of 
the acoustic source, otherwise it is small. Hence we can use $I(x)$ to help locate the source point. 
However, we need to evaluate $I(x)$ for every sampling point, or compute it for a total number of $\mathcal{O}(m^3)$ times 
when we use a sampling domain with an $m\times m\times m$ mesh. 
This can be still rather expensive computationally for a very large $m$. 
We are now going to formulate a multilevel sampling algorithm, which can essentially reduce 
the computational effort in evaluating the index function $I(x)$. 

The new multilevel algorithm is motivated by a simple observation. 
We can easily see that the index function $I(x)$ vanishes in the entire sampling domain except 
at the source point, so we just need to focus on those sampling points at which $I(x)$ 
is relative large, repeating the procedure iteratively with an initial relatively coarse sampling domain. 
This leads us to the following multilevel sampling algorithm.

\ms 
{\bf Multilevel sampling algorithm}
\begin{enumerate}
\item Select a sampling domain $D$ that contains the unknown source point $x_s$.\\[2mm]
Choose a uniform (coarse) mesh on $D$, consisting of cubic elements of equally size, denoted 
by $D_0$.\\[2mm]
Select a cut-off value $c$, a number $N$ for the maximum iterations,  and set $n:=1$.
\item Compute the index function value $I(x)$ for each sampling 
point
$x\in D_{n-1}$ using \eqref{eq:index}.
If $I(x)\geq c$ at a sampling point $x$, select all the vertices of the cubic elements in $D_{n-1}$ 
that share $x$ as one of their vertices, otherwise drop the grid point. 
Update $D_{n-1}$ by all those selected grid points.
\item If $n\leq N$, refine the mesh $D_{n-1}$ to $D_n$, then set $n:=n+1$ and go to step 2; otherwise, set $D_n:=D_{n-1}$ and go to step 4.
\item Output all the grid points in $D_n$ for the approximate position of the point source $x_s$.
\end{enumerate}

\begin{Remark}
One may apply any existing refinement strategy for the refinement required in step 3 of the 
multilevel sampling algorithm. 
In all our numerical simulations reported in the next section, 
we have adopted the simple bisection technique, namely 
each cubic element is partitioned into 8 equal smaller cubes.
\end{Remark}

We can easily see that the multilevel sampling algorithm involves only matrix-vector operations, 
no any optimization process, matrix inversions or the solution of large-scale ill-posed linear systems.
Moreover, its major cost is to update the indicator function by the explicit formula \eqref{eq:index} at each iteration, and the computational sampling region $D_n$ shrinks iteratively based on the cut-off value $c$. 
Compared with most existing sampling methods of computational complexity $\mathcal{O}(m^3)$ 
($m$ is the number of sampling points in each direction), 
this multilevel algorithm is much less expensive. 
In addition, the cut-off value $c$ is relatively easy to select and insensitive to the noise level. 
In practice, one may simply take the cut-off value to be 0.9 or 0.95 in each iteration. 

Consequently, we can conclude that our novel algorithm is a direct sampling method, and is extremely simple and inexpensive. 
We may observe in the following numerical section that the algorithm can work well with partial scattered data as well as 
with a rather small number of receivers. In terms of these aspects, this novel multilevel sampling algorithm outperforms some popular existing sampling methods such as the well-known linear sampling type methods \cite{BL, LXZ1, XML}, where the cut-off values are very sensitive to the noise and difficult to choose, and the number of incidences should be large. It is worth mentioning that our multilevel sampling algorithm to be presented here is essentially different in nature from the multilevel linear sampling developed in \cite{LZ}: the cut-off values $c$ is much easier to select and fixed during the iterations, and it works with much less receivers. Moreover, the proposed multilevel algorithm is robust against noise in the observed data.

\section{Numerical simulations} \label{NS}
In this section, we provide several numerical experiments to verify the effectiveness and efficiency of the newly proposed multilevel sampling method. All the programs in our experiments are written in MATLAB and run on a 2.83 GHz PC with 16GB memory.

We first list the parameters that are employed in our numerical examples. The depth of the ocean waveguide 
$h=100$, the heights of 
the first and second layers $\Gamma_1$ and $\Gamma_2$ of the waveguide 
are $d_1=100/3$ and $d_2=200/3$ respectively. 
The wavenumber $k_i=2\pi f/c_i$ $(i=1,2,3)$, where $f=75$, $c_1=1000$, $c_2=1500$ and $c_3=3000$. The refractive index $n_1=1$, $n_2=1/2$ and $n_3=1/3$. The densities $\rho_1=1000$, $\rho_2=1500$, and $\rho_3=3000$. 

We implement the iterative method \eqref{eq:uiter} to generate the total field $p$ and terminate the iteration when the relative $L^2$-norm error
between two fields of consecutive iterations is less than $\varepsilon=10^{-3}$. Then the scattered filed $p^s$ is obtained by the formula \eqref{eq:u_scattered}. Moreover, random noises are added to the exact scattered field in the following form:
\[p^s_\delta(x^r):=p^s(x^r)\Big[1+\delta\Big(r_1(x^r)+ir_2(x^r)\Big)\Big], \quad x^r\in\Gamma_r,\]
where $r_1(x^r)$, $r_2(x^r)$ are two normal random numbers
varying from -1 to 1, and $\delta$ represents the level of noise which is usually taken as $10\%$ unless specified otherwise. 
The mesh size of the forward problem is selected as 
 $1/15$, while the initial mesh size for the inverse problem is
$4$. The cut-off value $c$ in the multilevel sampling algorithm is chosen as 0.95 in the following numerical tests. In the first three numerical examples, the sampling region $D$ is set to be $[10,40]\times[10,40]\times[10,40]$. We select the sampling region $D$ as $[10,40]\times[10,40]\times[25,55]$ in the last numerical test. 

{\bf Example 1.} We consider a point source $x_s$ situated at $(18,18,25)$, see the red point in figure \ref{fig:ex11}(a). Only 5 receivers are employed, at the locations $\Gamma_{r_{11}}=(70, 10+5n, 90)$, $n=0,1,\cdots,4$, see the black points of $\Gamma_{r_{11}}$ in \ref{fig:ex11}(a). 
The known scatterer is assumed to be in the second layer, but not contained in the sampling domain $D$; 
see the cyan cube located at the region $[32,34]\times[32,34]\times[42,44]$ in figure \ref{fig:ex11}(a).
%

\begin{figurehere}
 \hfill{}\includegraphics[clip,width=0.45\textwidth]{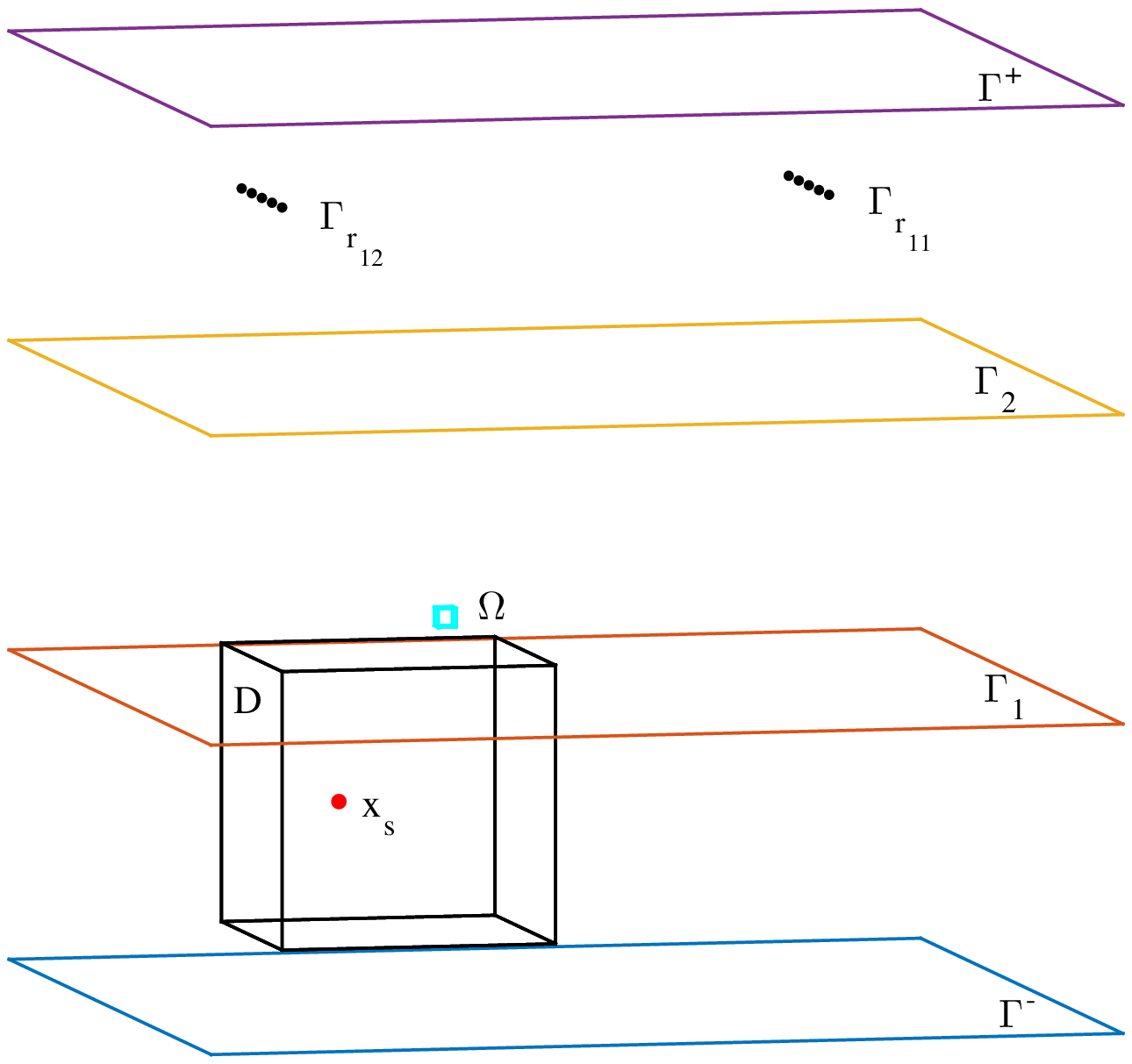}\hfill{}
 \hfill{}\includegraphics[clip,width=0.45\textwidth]{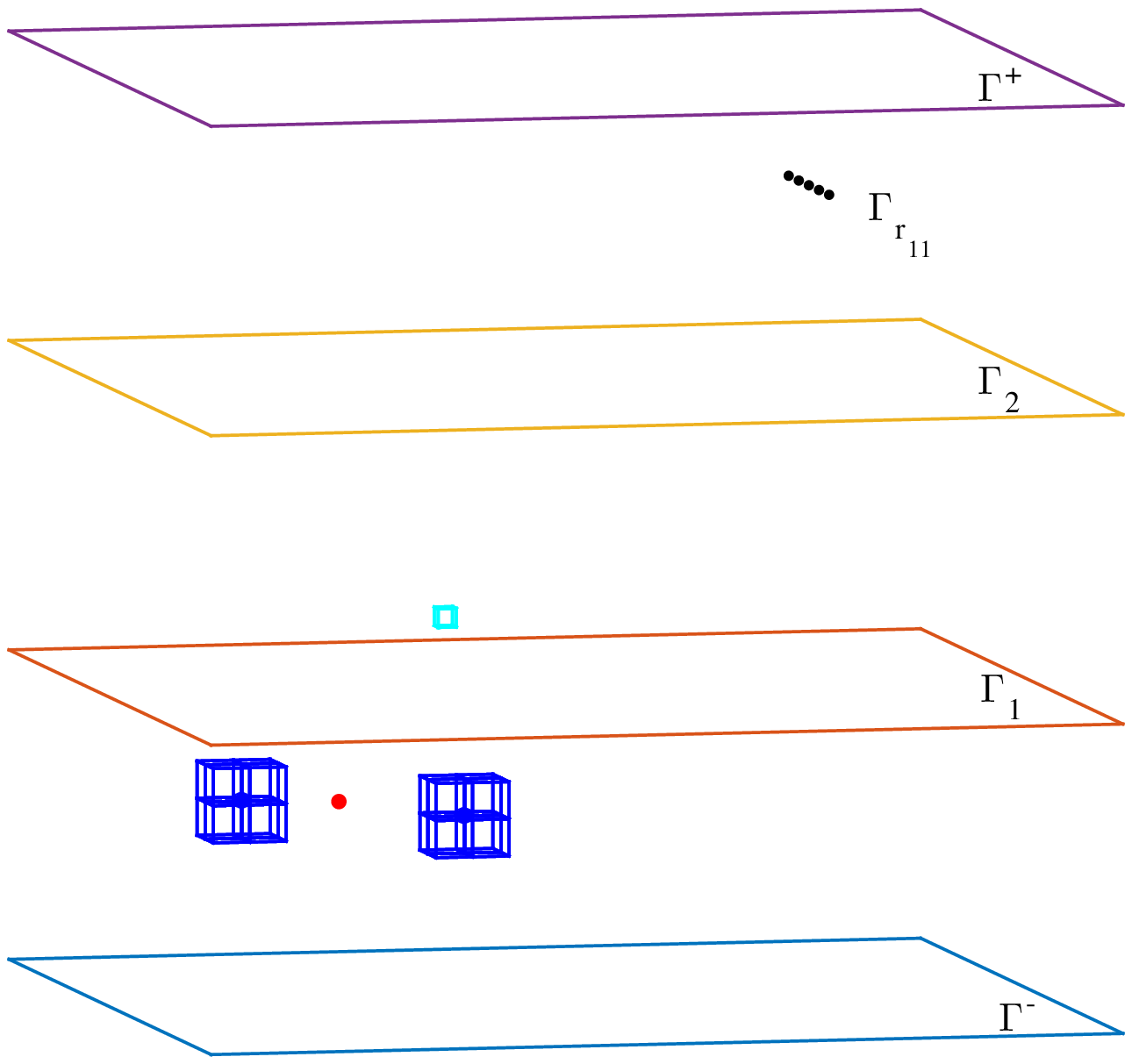}\hfill{}
 
 \hfill{}(a)\hfill{} \hfill{}(b)\hfill{}
 
 \caption{\label{fig:ex11} \small{(a) Location of the true point source $x_s$ in Example 1; (b) the reconstruction at the 
 1st iteration using the receivers on $\Gamma_{r_{11}}$.}}
 \end{figurehere}
 The numerical reconstruction of the first iteration is shown in Figure \ref{fig:ex11}(b).  
Because of the strong refraction of the scattered wave between each layer, the receivers in $\Gamma_{r_{11}}$ can obtain almost no scattered data in the layer $M_3$\,. Therefore, the blue cubes in Figure \ref{fig:ex11}(b) provide 
a poor location of the source. This numerical result explains that the source contained in the bottom layer may not be detected when the receivers are placed near the surface of the ocean.
  
As the vertical scattered waves can propagate without refraction, we then put the receivers in $\Gamma_{r_{12}}=(10, 10+5n, 90)$, $n=0,1,\cdots,4$, see the black points of $\Gamma_{r_{12}}$ in \ref{fig:ex11}(a). The reconstructed results are provided in Figure \ref{fig:ex12}.

\begin{figurehere}
 \hfill{}\includegraphics[clip,width=0.31\textwidth]{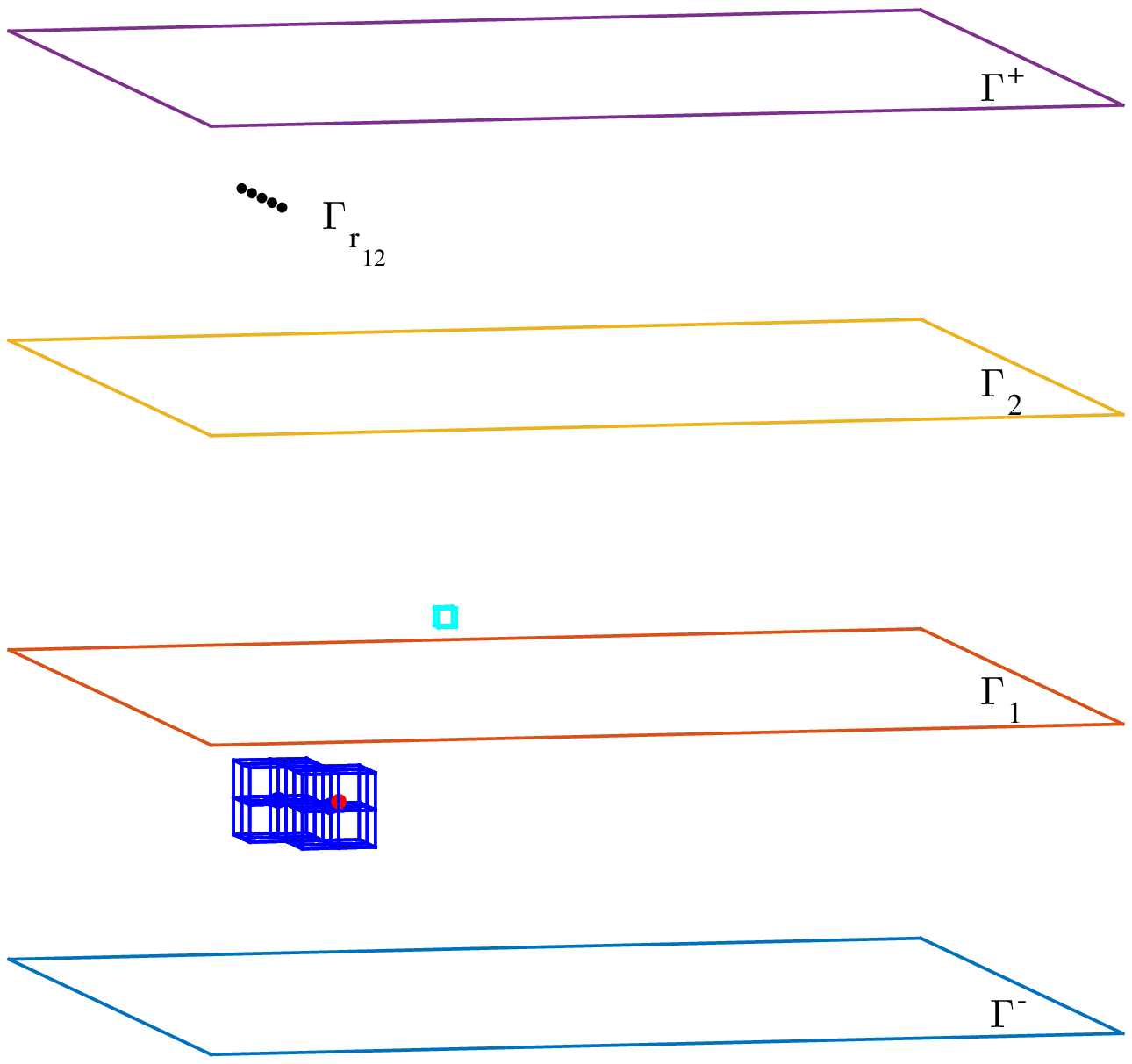}\hfill{}
 \hfill{}\includegraphics[clip,width=0.31\textwidth]{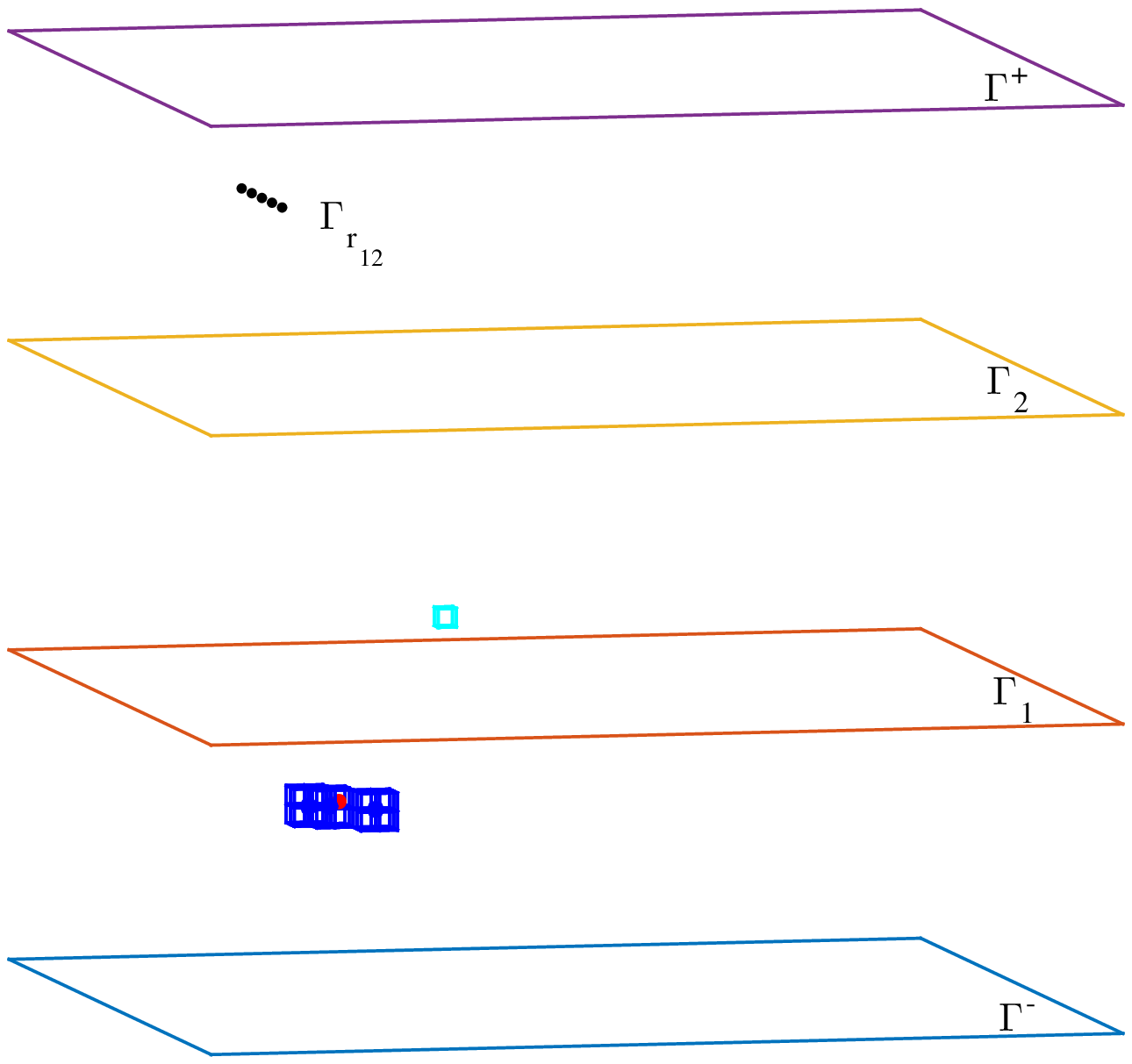}\hfill{}
 \hfill{}\includegraphics[clip,width=0.31\textwidth]{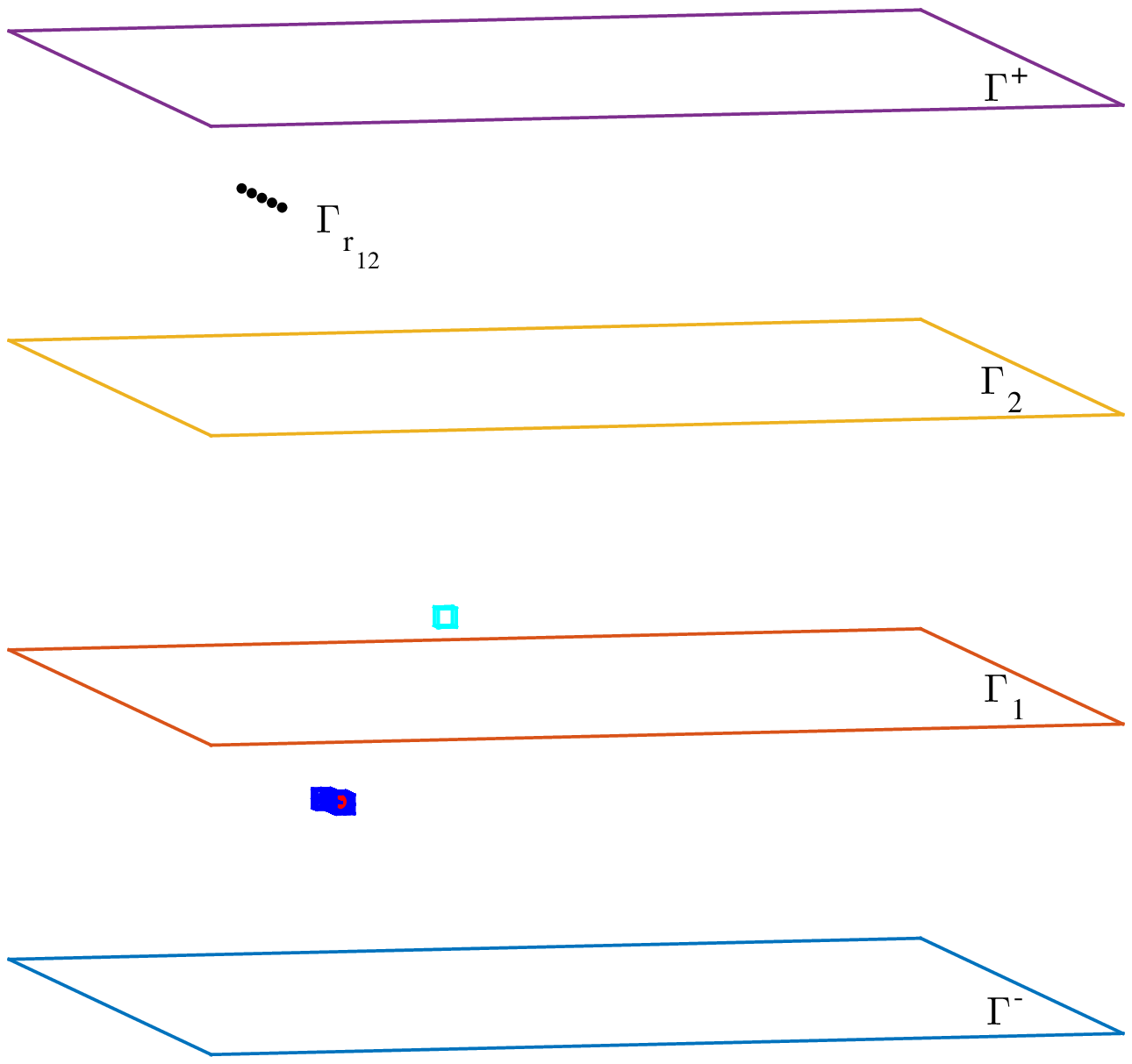}\hfill{}
 
 \hfill{}(a)\hfill{} \hfill{}(b)\hfill{} \hfill{}(c)\hfill{}
 
 \caption{\label{fig:ex12} \small{(a)-(c) the reconstructions by the 1st to 3rd iterations in Example 1 
 using the receivers on $\Gamma_{r_{12}}$.}}
 \end{figurehere}

We can observe that the reconstruction of the source in Figure \ref{fig:ex12}(c) is much better compared with the one in Figure \ref{fig:ex11}(b). Accordingly, we can conclude that the newly proposed method can provide 
a rather reliable estimation of the position of the point source with the vertical partial scattered data, and 
the locations of receivers are very important for detecting the point source $x_s$.

{\bf Example 2.} This example considers the same point source $x_s$ and known scatterer $\Omega$ 
as in Example 1, but 5 other receivers are applied, at the positions $\Gamma_{r_{21}}=(60, 60+5n, 60)$, $n=0,1,\cdots,4$;
see the black points of $\Gamma_{r_{21}}$ in Figure \ref{fig:ex2_demo}.

\begin{figurehere}
 \hfill{}\includegraphics[clip,width=0.45\textwidth]{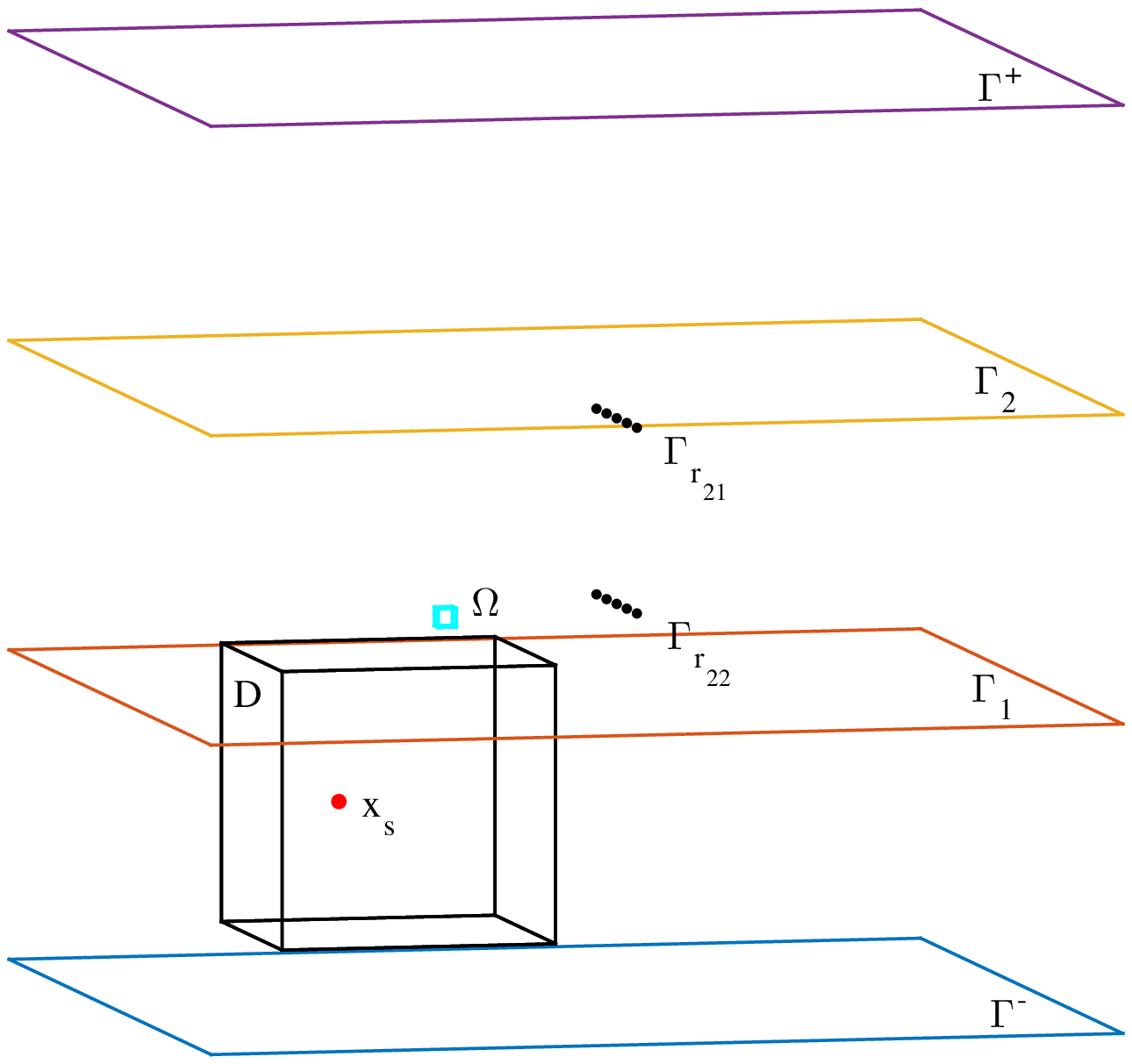}\hfill{}
 \vskip -0.6 truecm
 \caption{\label{fig:ex2_demo} \small{The location of the true point source $x_s$ in Example 2.}}
 \end{figurehere}

 The reconstructions are presented in Figures \ref{fig:ex2}(a)-(c). 
 We can easily find that the receivers of $\Gamma_{r_{21}}$ are close to the surface $\Gamma_2$, 
 so only few refractive scattered waves can be received by $\Gamma_{r_{21}}$. 
 Although some additional cubes are seen in the reconstructions, the source $x_s$ is still located. 
 Then, we set the detecting devices close to $\Gamma_1$, namely, at $\Gamma_{r_{22}}=(60, 60+5n, 40)$, $n=0,1,\cdots,4$; see the black points of $\Gamma_{r_{22}}$ in Figure \ref{fig:ex2_demo}, and 
 the reconstructions are shown in Figures \ref{fig:ex2}(d)-(f).
 As the receivers in $\Gamma_{r_{22}}$ is nearer to layer $M_1$ than $\Gamma_{r_{21}}$, 
 so the scattered data can be measured much more in $\Gamma_{r_{22}}$ than in $\Gamma_{r_{21}}$. 
 Consequently, the recovered position of the source $x_s$ by $\Gamma_{r_{22}}$ is much better than that by $\Gamma_{r_{21}}$. 
 This example shows again that the positions of receivers are quite significant in recovering 
 the point source $x_s$.

\begin{figurehere}
 \hfill{}\includegraphics[clip,width=0.31\textwidth]{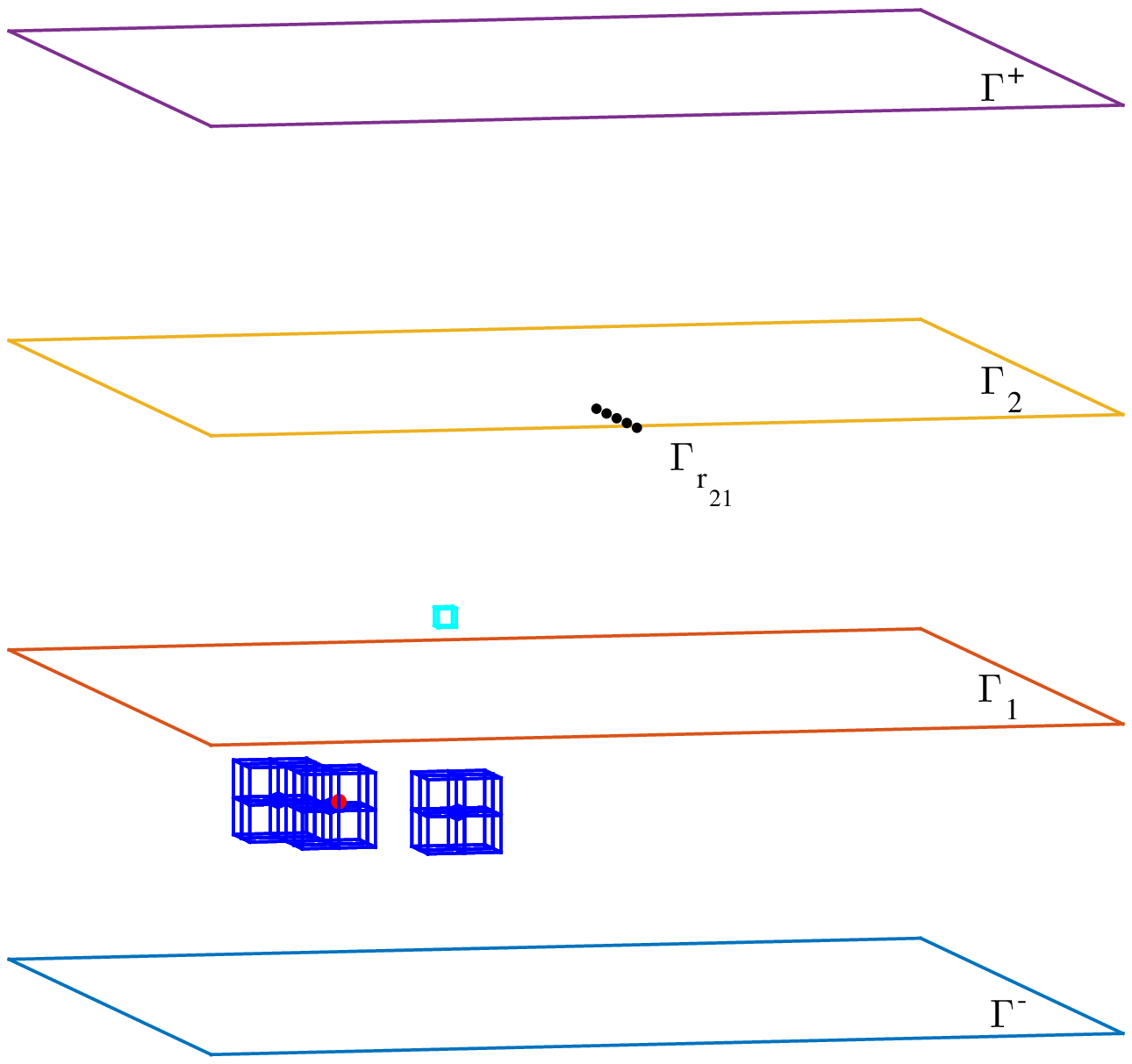}\hfill{}
 \hfill{}\includegraphics[clip,width=0.31\textwidth]{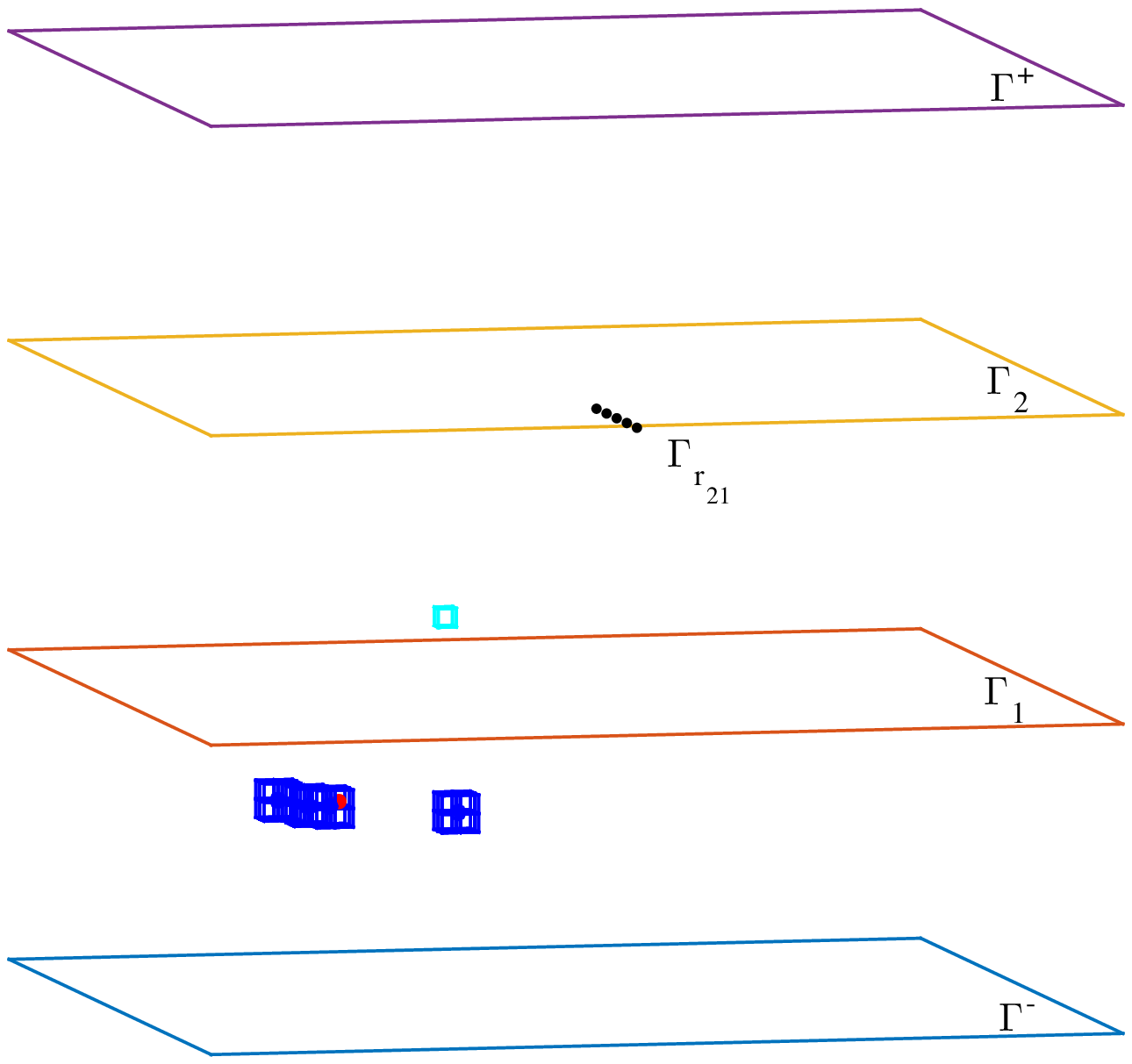}\hfill{}
 \hfill{}\includegraphics[clip,width=0.31\textwidth]{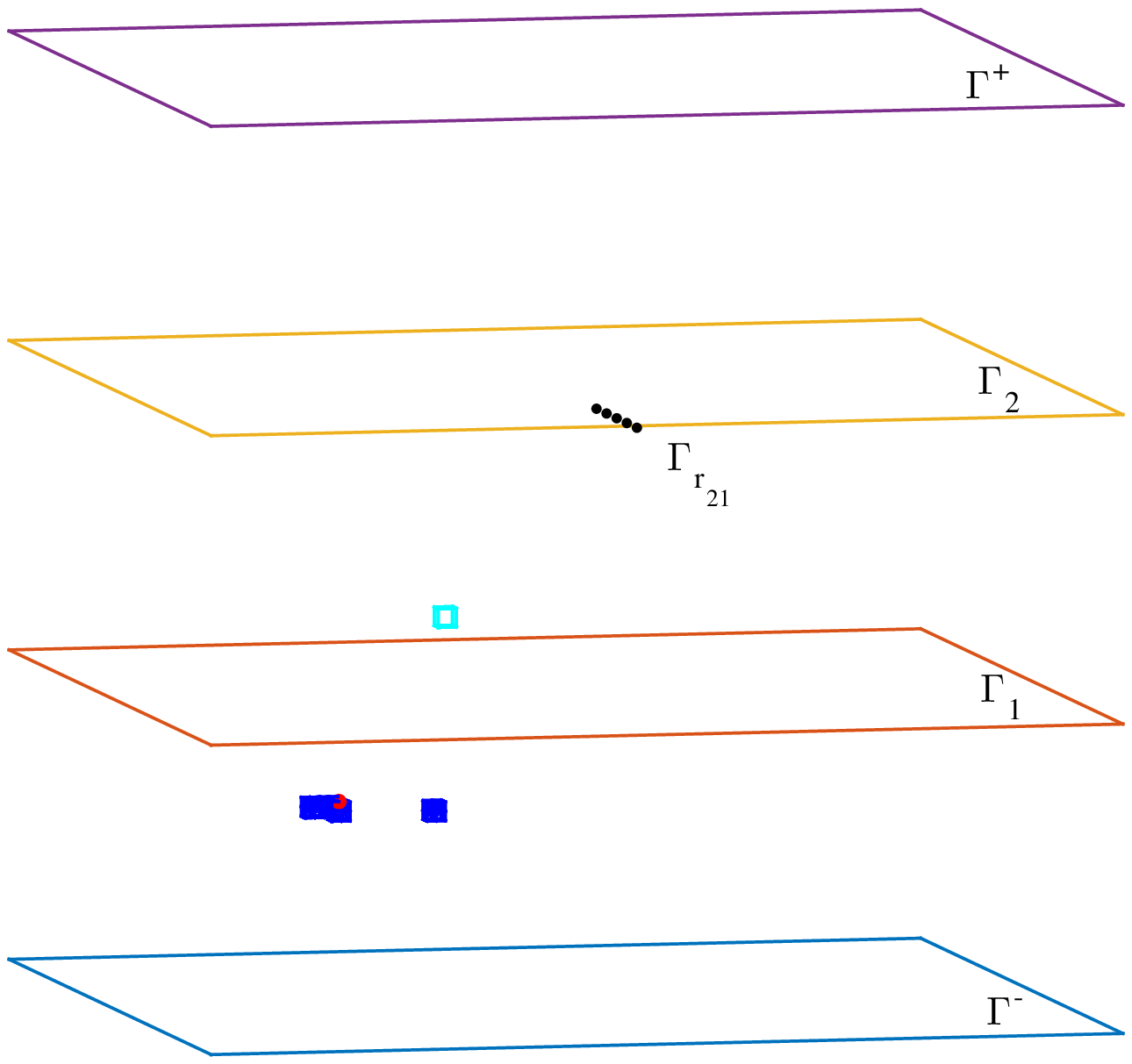}\hfill{}
 
 \hfill{}(a)\hfill{} \hfill{}(b)\hfill{} \hfill{}(c)\hfill{}
 
  \hfill{}\includegraphics[clip,width=0.31\textwidth]{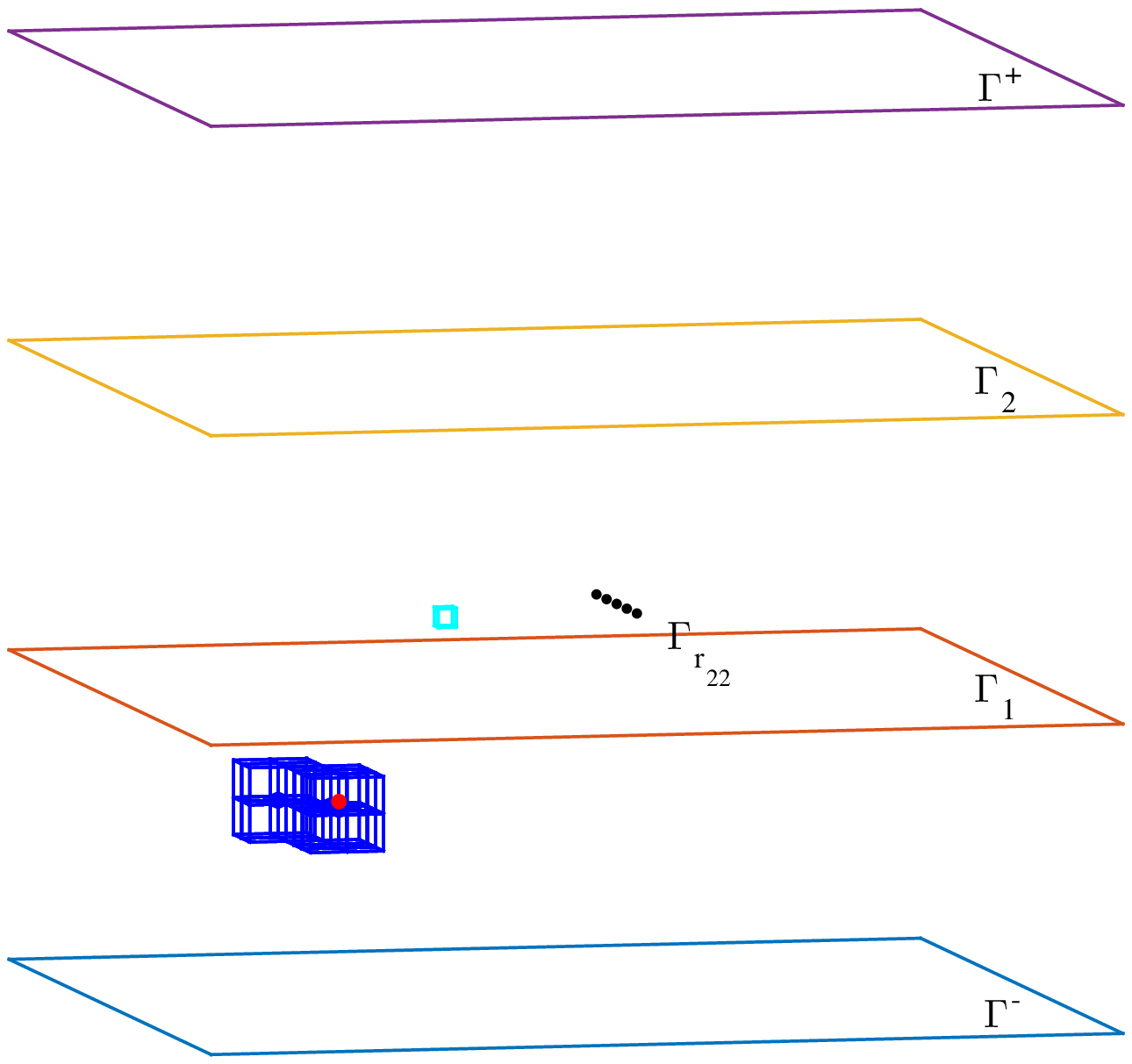}\hfill{}
 \hfill{}\includegraphics[clip,width=0.31\textwidth]{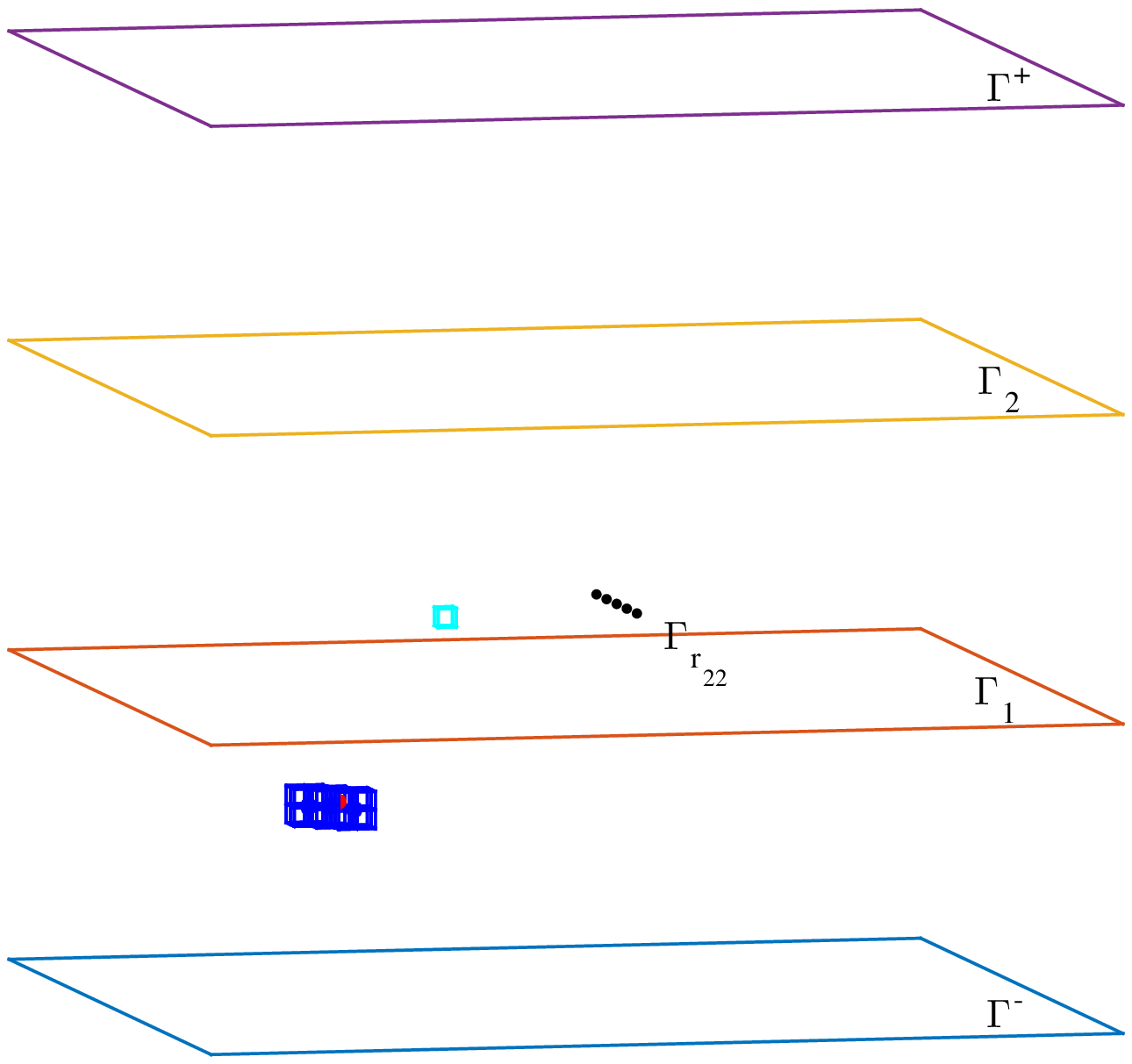}\hfill{}
 \hfill{}\includegraphics[clip,width=0.31\textwidth]{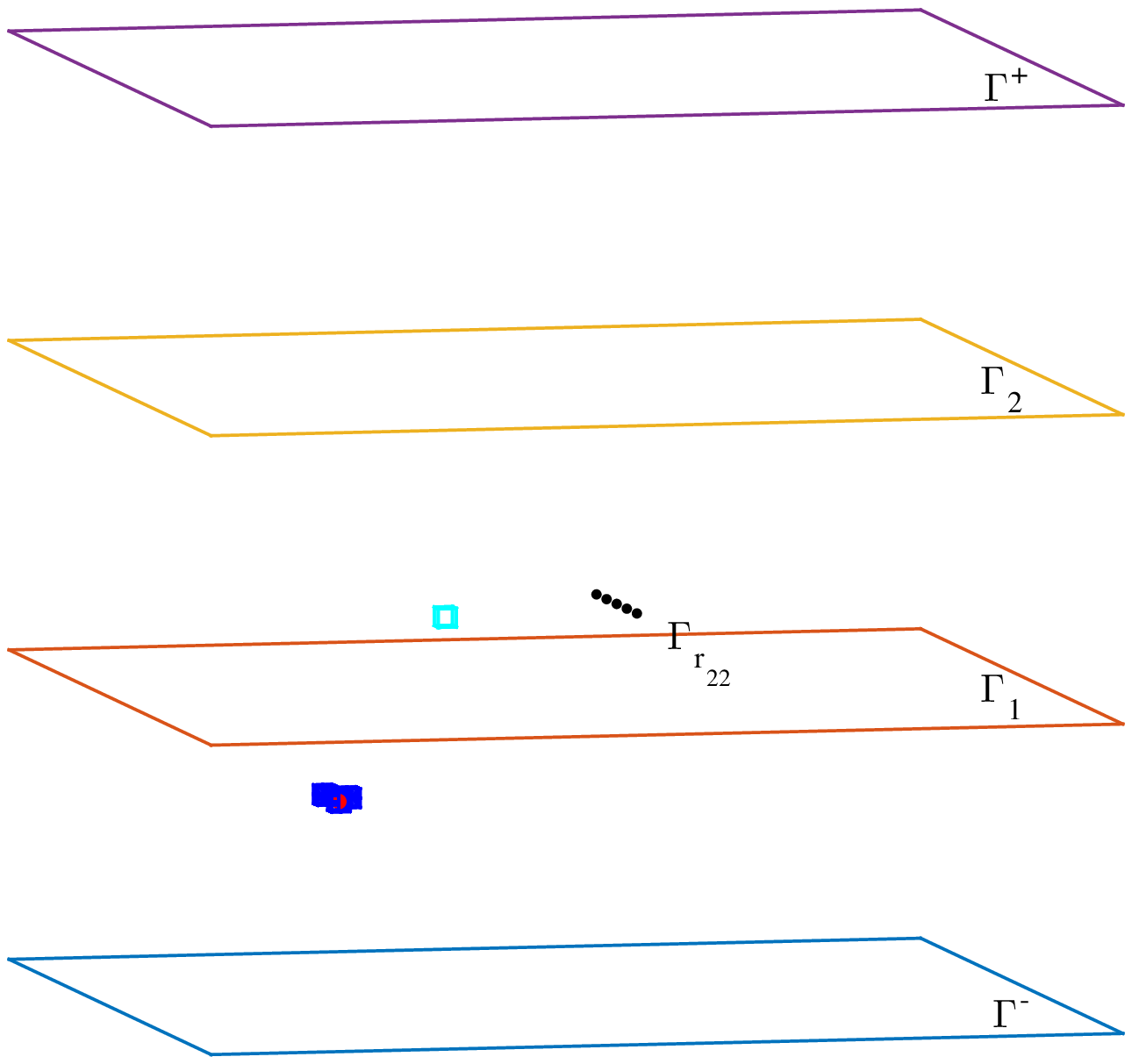}\hfill{}
 
 \hfill{}(d)\hfill{} \hfill{}(e)\hfill{} \hfill{}(f)\hfill{}
 
 \caption{\label{fig:ex2} \small{(a)-(c) (resp. (d)-(f)) the reconstructions by the 1st to 3rd iterations in Example 2 
 using the receivers on $\Gamma_{r_{21}}$ (resp. $\Gamma_{r_{22}}$).}}
 \end{figurehere}

{\bf Example 3.} In this experiment, This example considers the same point source $x_s$ and known scatterer $\Omega$ 
as in Example 1, but using another set of 5 receivers at the locations $\Gamma_{r_3}=(60, 60+5n, 30)$, $n=0,1,\cdots,4$;
see the black points of $\Gamma_{r_3}$ in Figure \ref{fig:ex3}(a). 

The reconstructions are shown in the Figure \ref{fig:ex3}. 
Without the refraction of scattered waves in the layer $M_1$, the position of the source is accurately found 
within just three iterations. In this case the new multilevel sampling method 
is quite  effective to provide a reliable estimation of the location of the source with a very small 
number of receivers.

\begin{figurehere}
 \hfill{}\includegraphics[clip,width=0.31\textwidth]{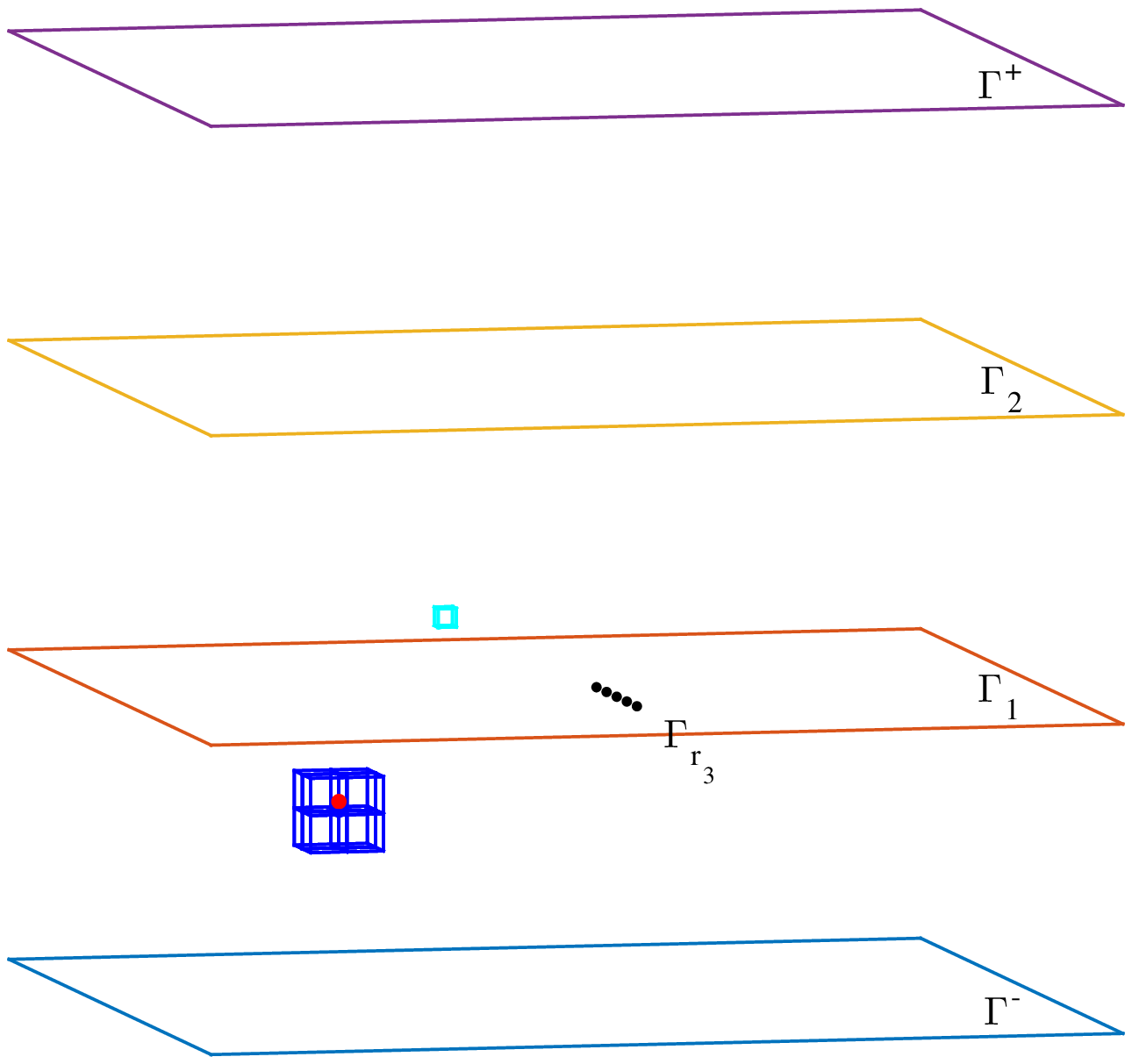}\hfill{}
 \hfill{}\includegraphics[clip,width=0.31\textwidth]{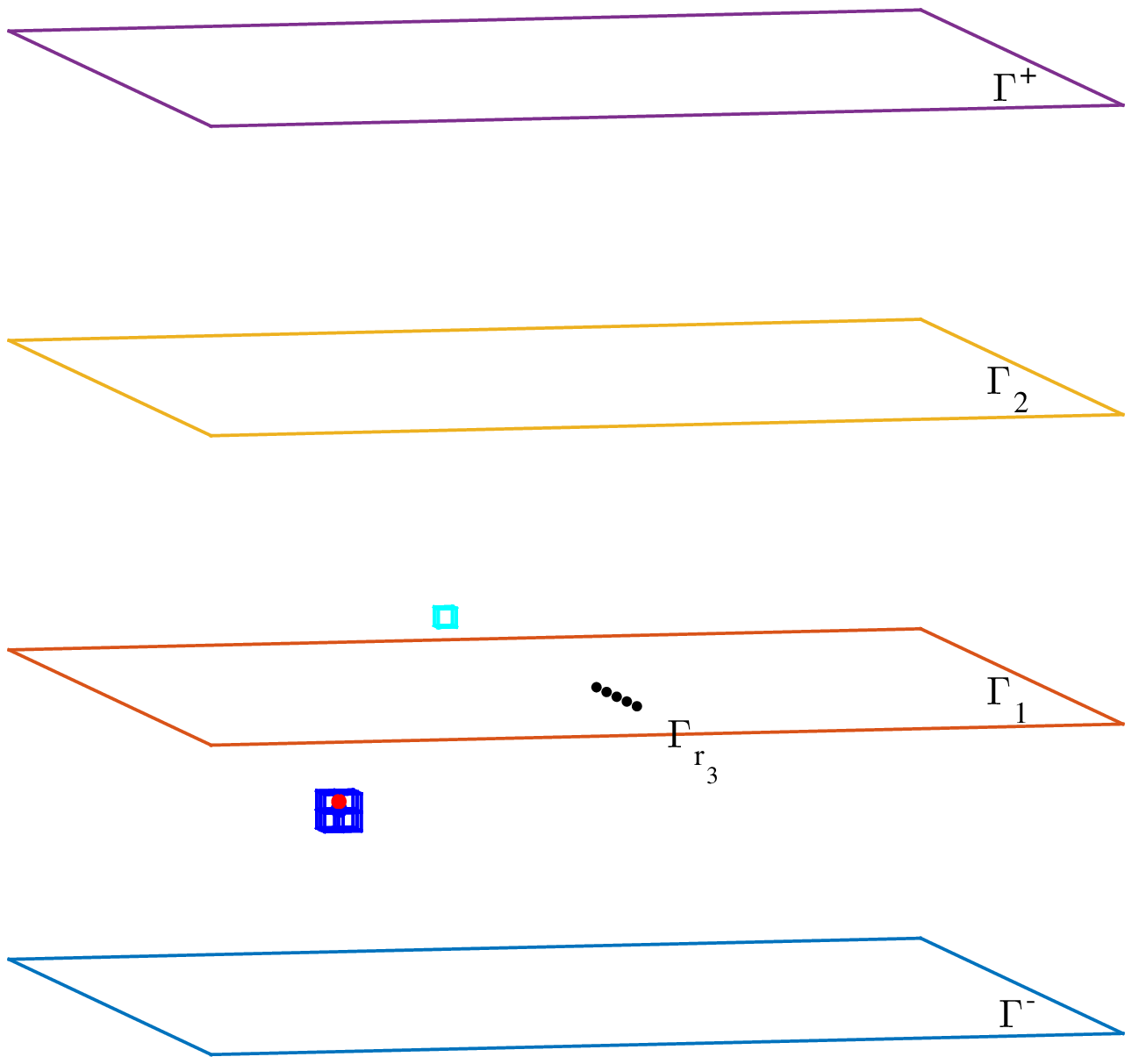}\hfill{}
 \hfill{}\includegraphics[clip,width=0.31\textwidth]{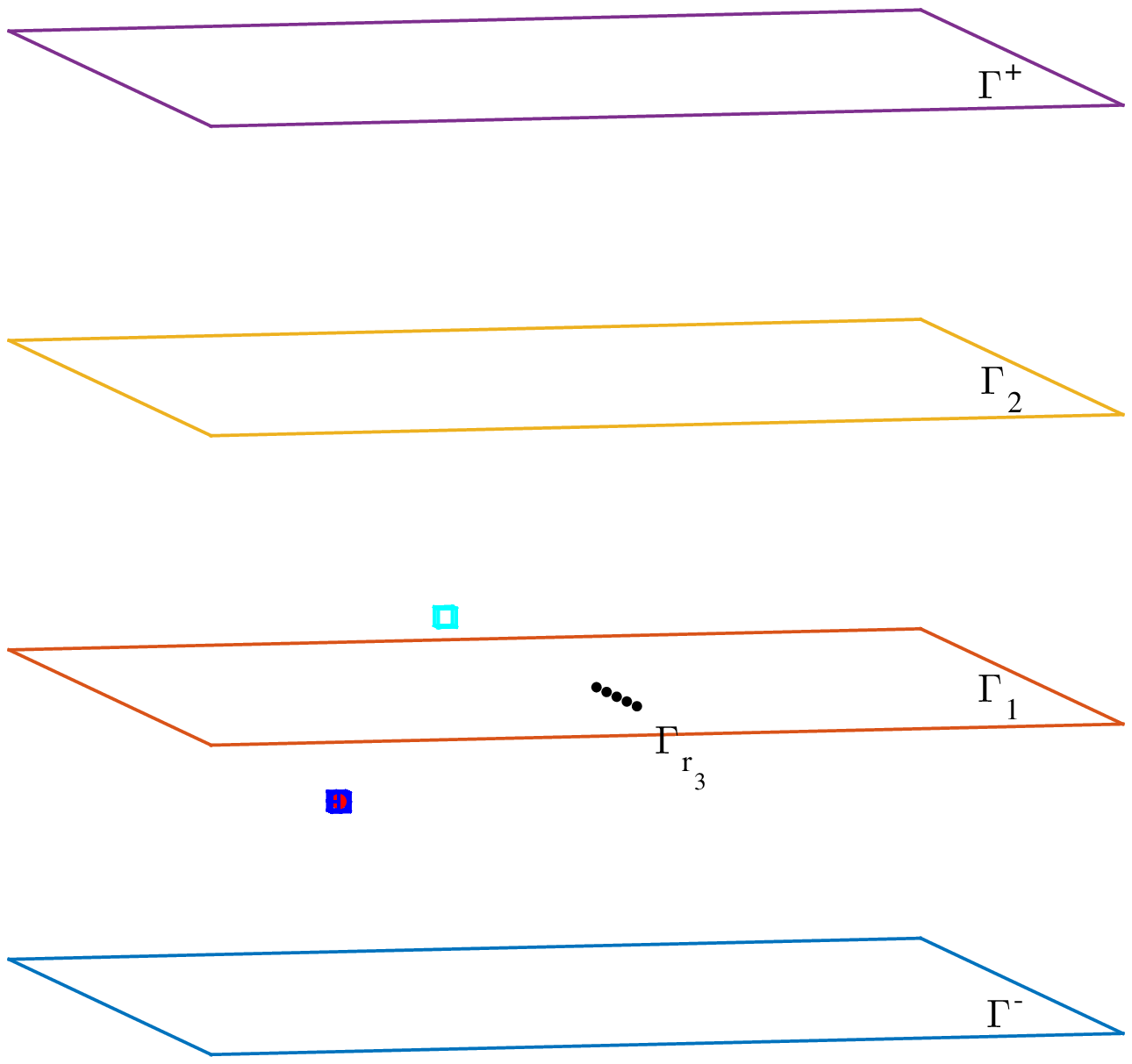}\hfill{}
 
 \hfill{}(a)\hfill{} \hfill{}(b)\hfill{} \hfill{}(c)\hfill{}
 
 \caption{\label{fig:ex3} \small{(a)-(c) the reconstructions by the 1st to 3rd iterations in Example 3 
 using the receivers on $\Gamma_{r_3}$.}}
 \end{figurehere}

{\bf Example 4.} This test investigates a point source $x_s$ locate at $(18,18,45)$;
see the red point in Figure \ref{fig:ex4_demo}.  Two sets of 5 receivers are used, at the positions $\Gamma_{r_{41}}=(60, 60+5n, 90)$ and $\Gamma_{r_{42}}=(60, 60+5n, 80)$, $n=0,1,\cdots,4$;
see the black points in Figure \ref{fig:ex4_demo}. The known scatterer $\Omega$ 
is set at $[46,48]\times[32,34]\times[42,44]$, which is outside the sampling region $D$;
see the cyan cube in Figure \ref{fig:ex4_demo}.  

\begin{figurehere}
 \hfill{}\includegraphics[clip,width=0.45\textwidth]{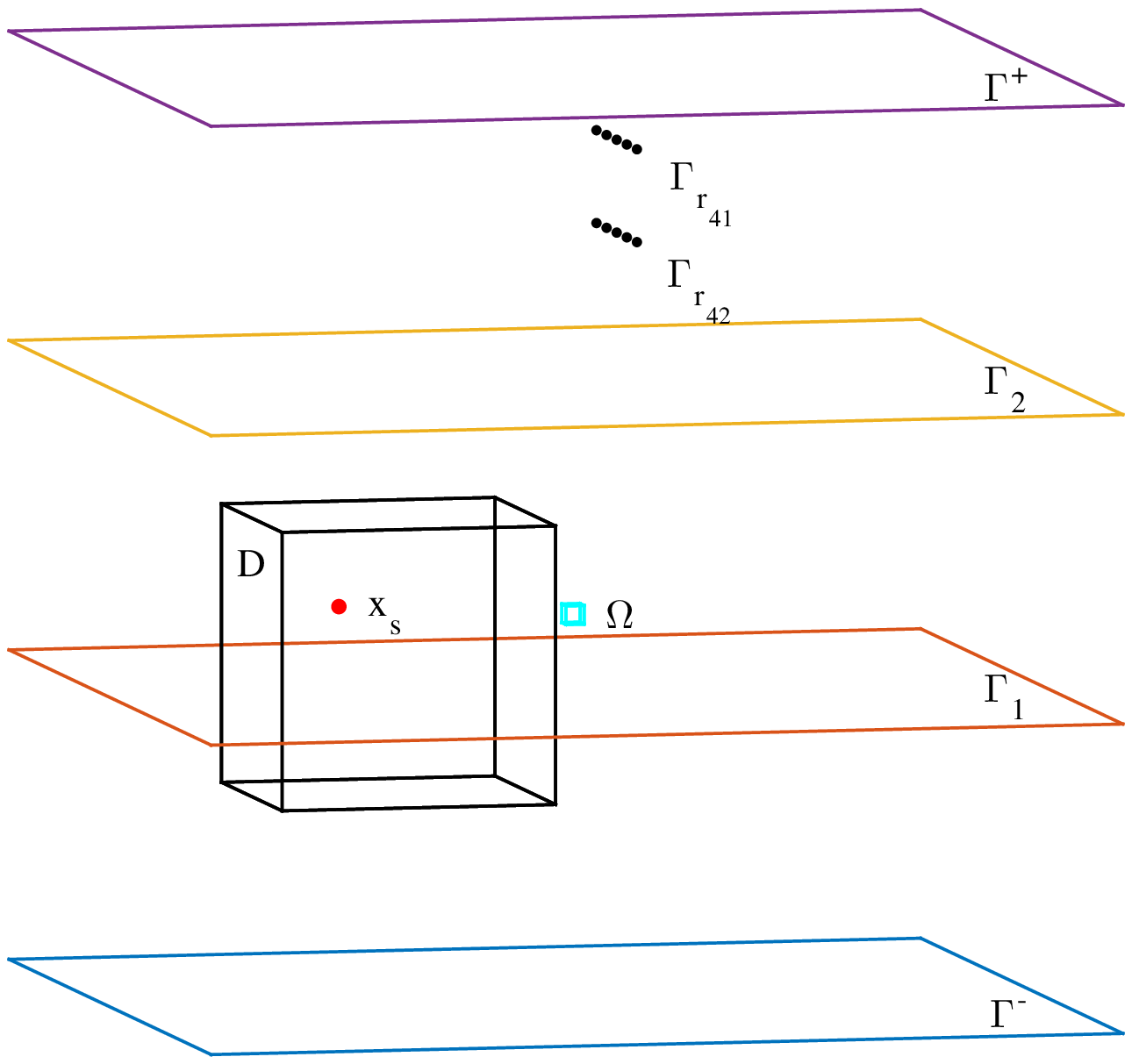}\hfill{}
 \vskip -0.6 truecm
 \caption{\label{fig:ex4_demo} \small{The location of the true point source $x_s$ in Example 4.}}
 \end{figurehere}

The reconstructions using the receivers on $\Gamma_{r_{41}}$ are shown in Figures \ref{fig:ex4}(a)-(c) 
while the ones using the receivers on $\Gamma_{r_{42}}$ are presented in Figures \ref{fig:ex4}(d)-(f). 
In Figures \ref{fig:ex4}(a)-(c), the position of the source is recovered, but some additional numerical 
artifact can be observed in the reconstruction. The artifact 
is due to the refraction of the scattered waves between layers $M_2$ and $M_3$. 
Then we place the detecting receivers in $\Gamma_{r_{42}}$ which is slightly closer to $\Gamma_2$, the location of the source is very accurately recovered.  
In spite of the refraction, the proposed method is able to provide a considerably reasonable estimation of the source's position with a very small number of receivers. 
Moreover, this example also explain that we may not obtain the accurate location of the source 
when the receivers are situated near the surface of the ocean.

\begin{figurehere}
 \hfill{}\includegraphics[clip,width=0.31\textwidth]{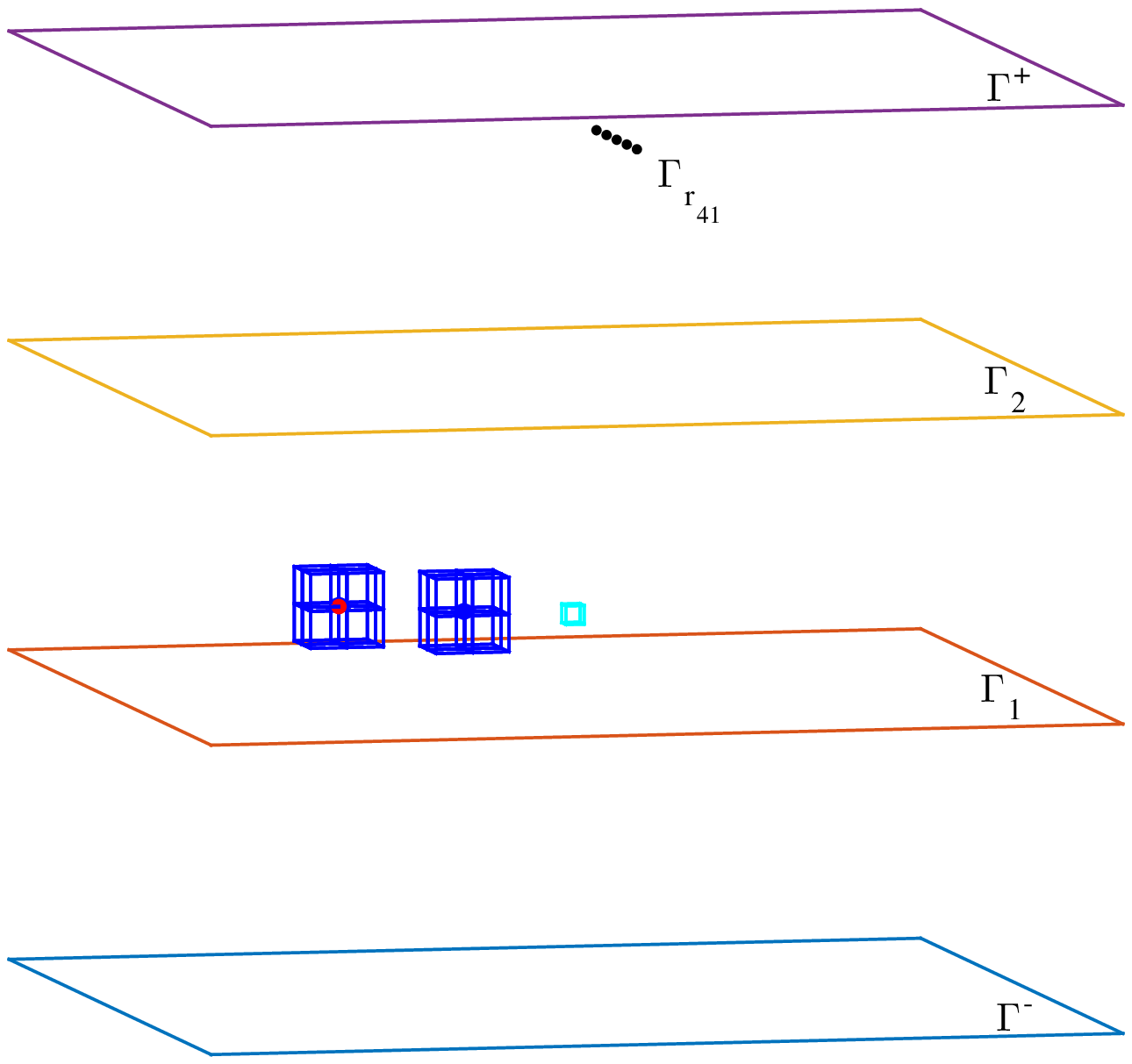}\hfill{}
 \hfill{}\includegraphics[clip,width=0.31\textwidth]{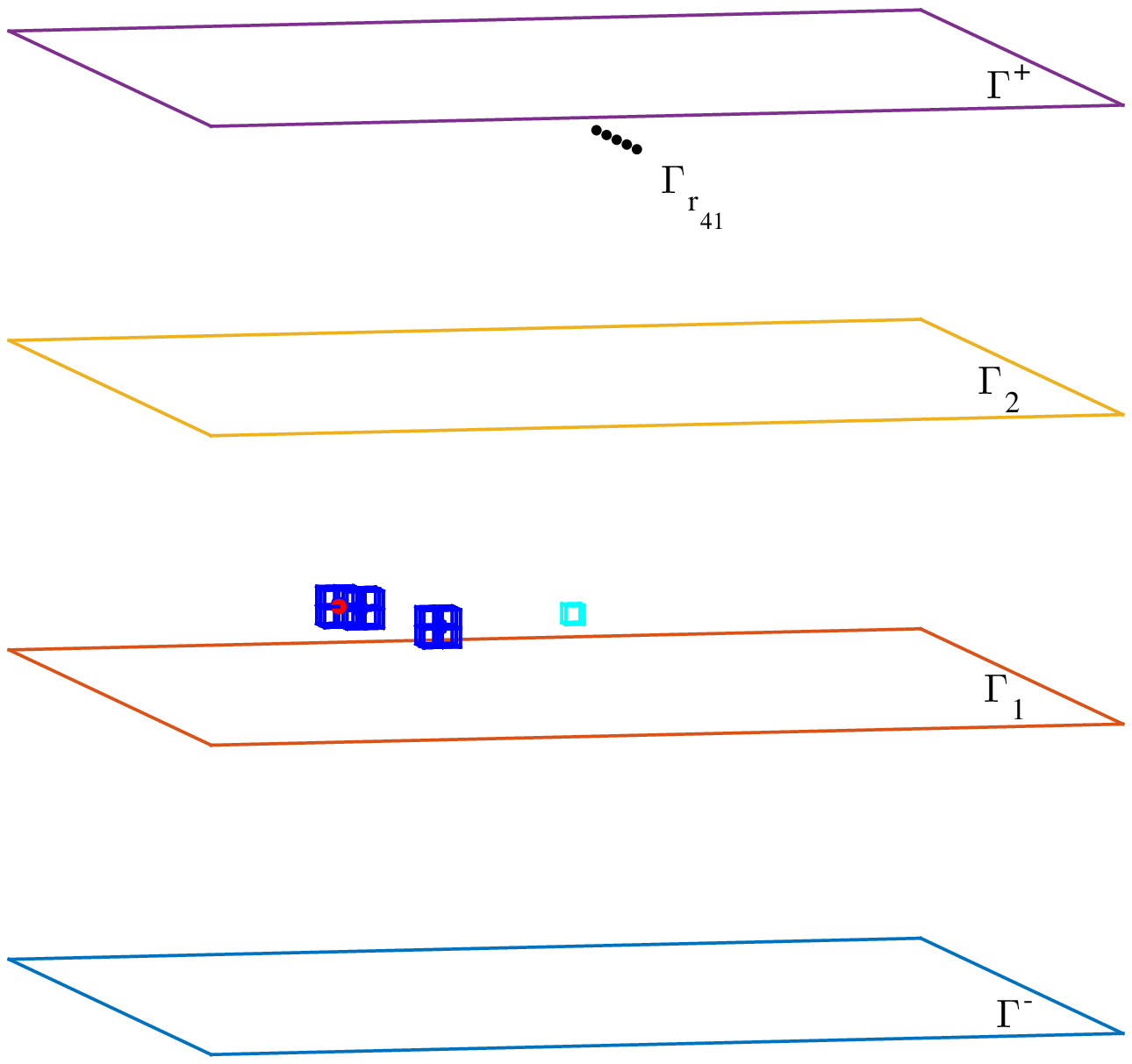}\hfill{}
 \hfill{}\includegraphics[clip,width=0.31\textwidth]{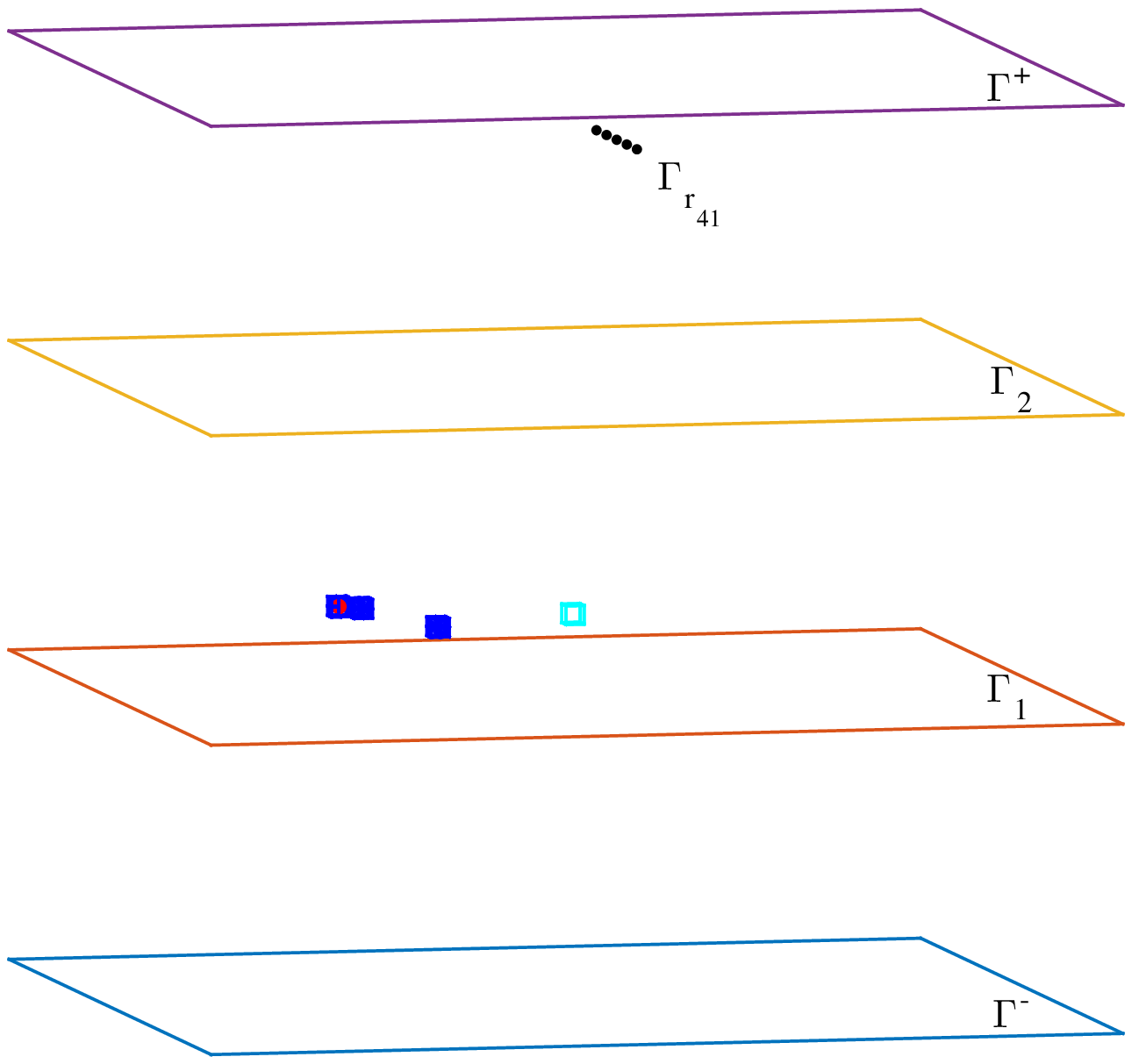}\hfill{}
 
 \hfill{}(a)\hfill{} \hfill{}(b)\hfill{} \hfill{}(c)\hfill{}
 
  \hfill{}\includegraphics[clip,width=0.31\textwidth]{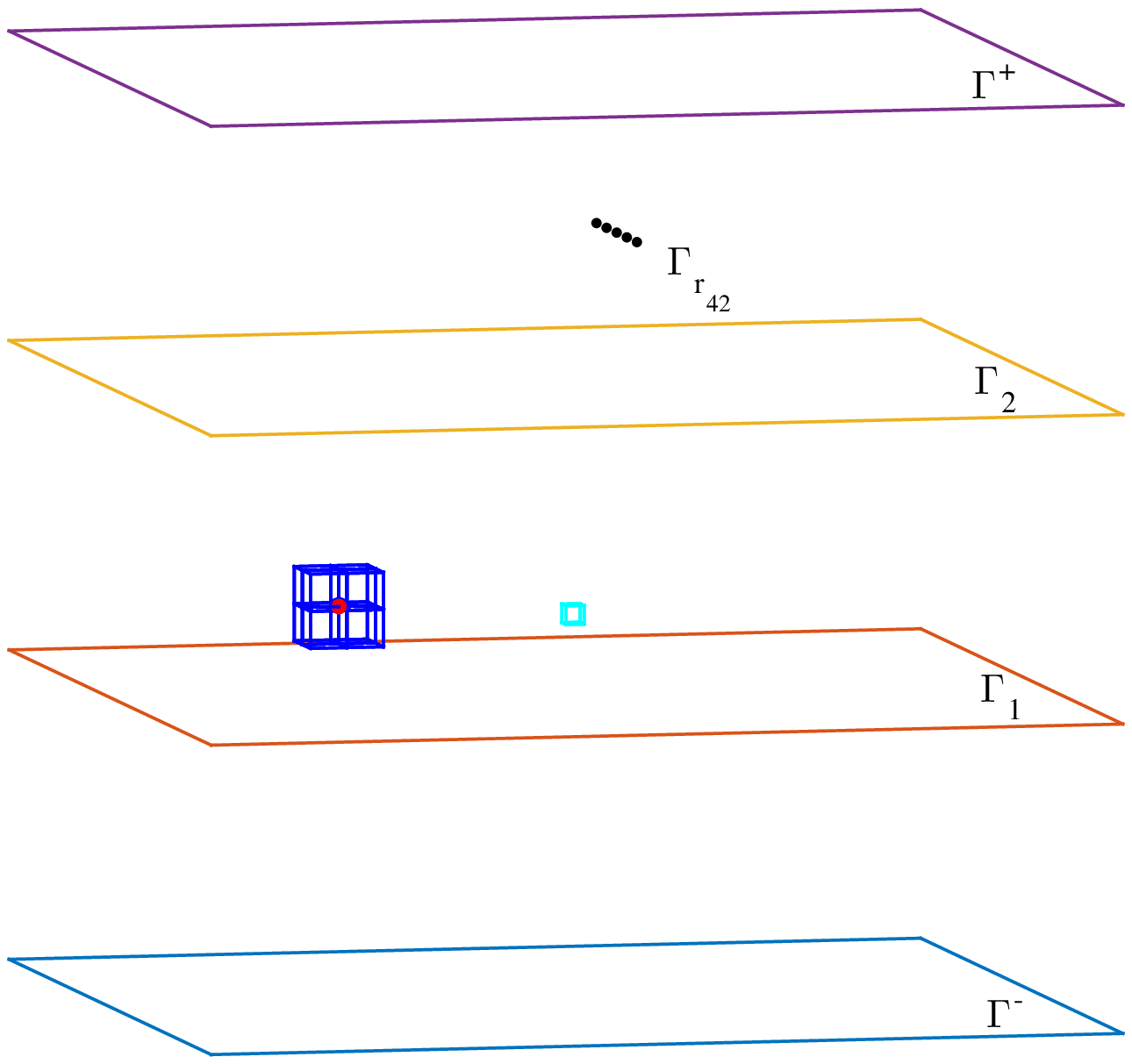}\hfill{}
 \hfill{}\includegraphics[clip,width=0.31\textwidth]{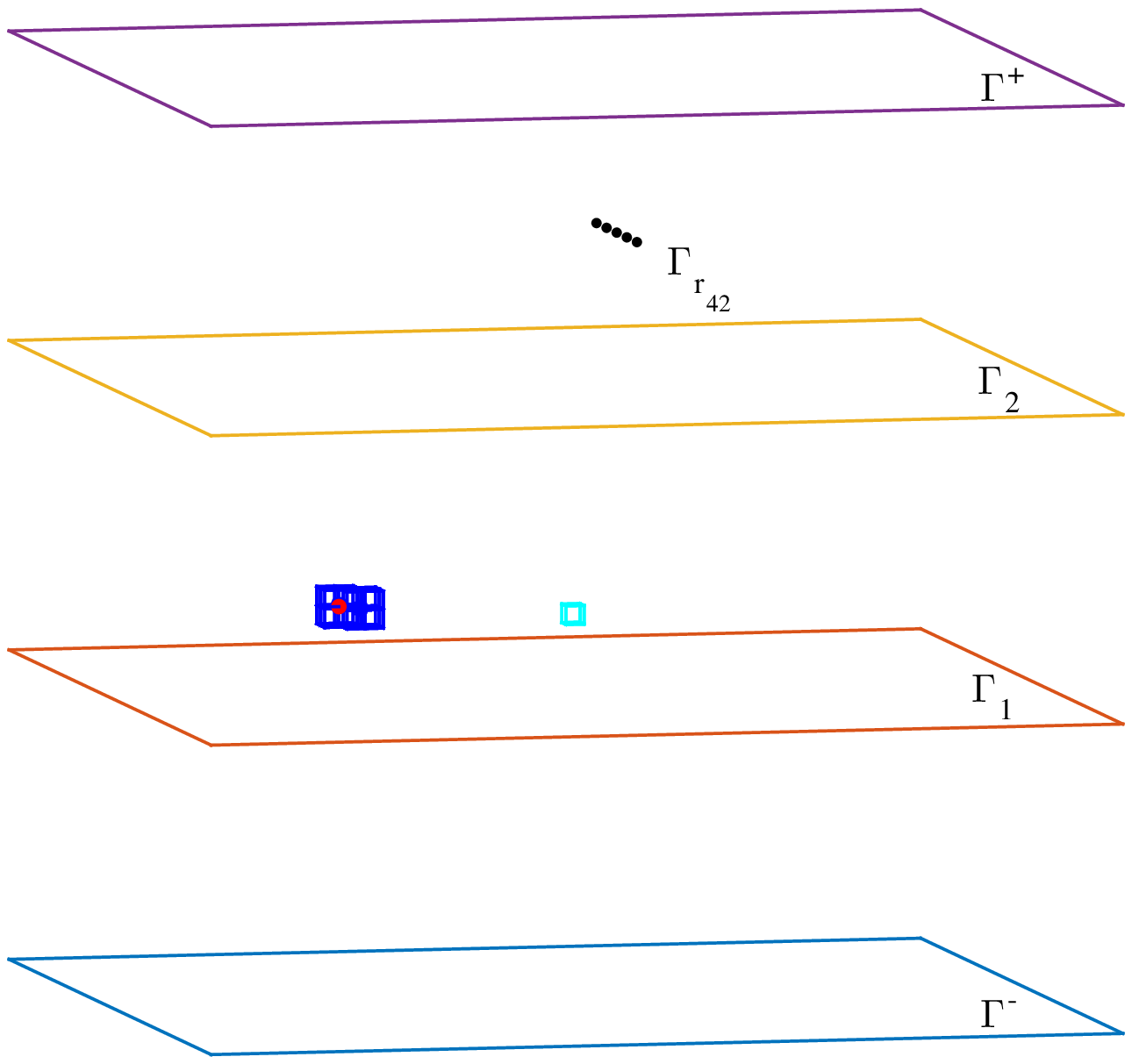}\hfill{}
 \hfill{}\includegraphics[clip,width=0.31\textwidth]{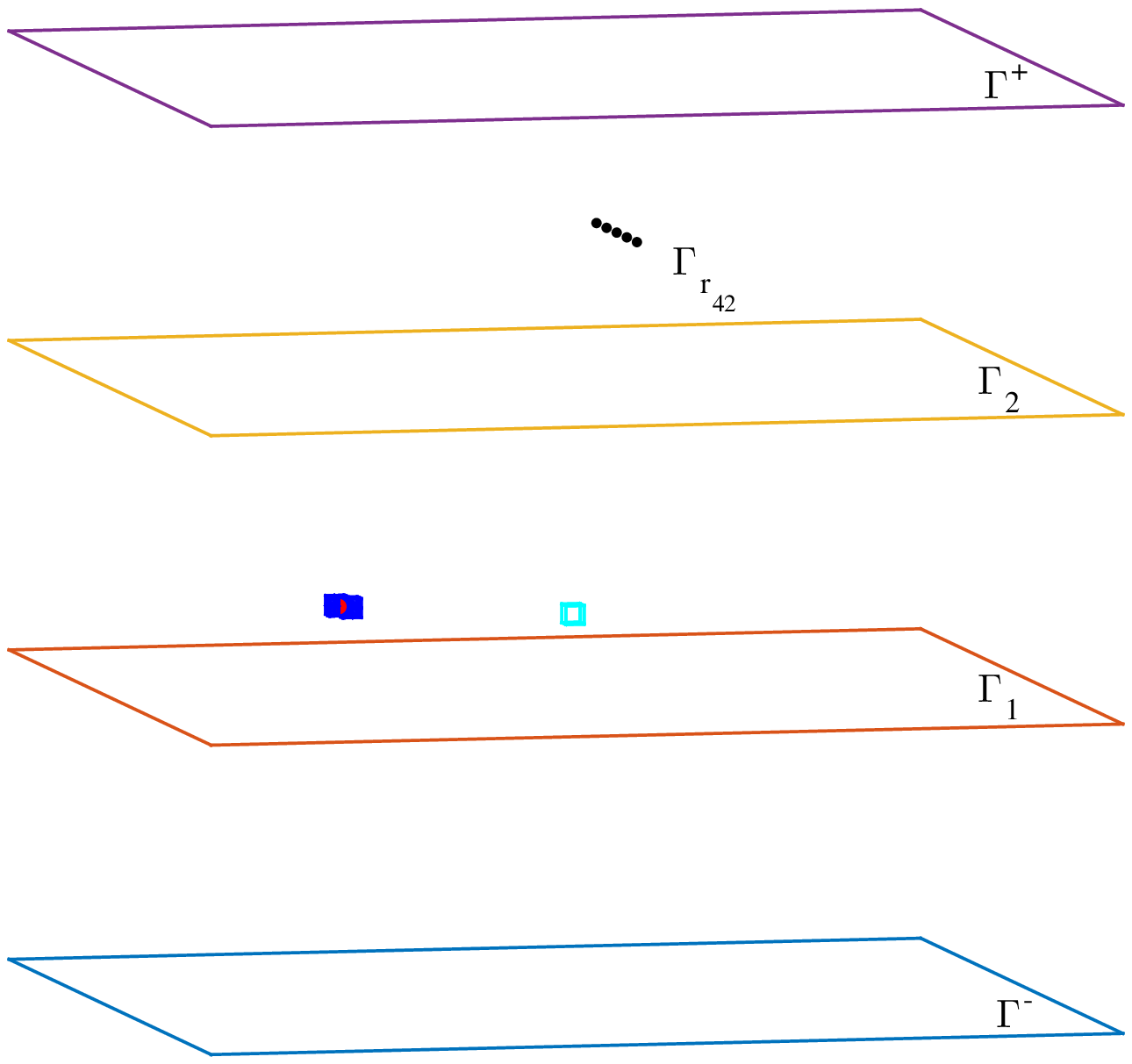}\hfill{}
 
 \hfill{}(d)\hfill{} \hfill{}(e)\hfill{} \hfill{}(f)\hfill{}
 
 \caption{\label{fig:ex4} \small{The reconstructions at the 1st to 3rd iterations in example 4 by $\Gamma_{r_{41}}$ (a)-(c) and by $\Gamma_{r_{42}}$ (d)-(f).}}
 \end{figurehere}

\section{Concluding remarks} \label{CR}
We have proposed a novel multilevel sampling algorithm 
to detect the position of a point source in a stratified ocean environment. The method exhibits 
several promising features: it is easy to implement, fast to converge and robust against noise. 
Moreover, the iterative algorithm can be viewed actually as a direct method, since it involves only matrix-vector operations 
and does not need any optimization process or the solution of any large-scale ill-posed linear systems. 
The method works with a very small number of receivers and its cut-off value is easy to select and can be fixed during the entire iterations. 
Moreover, this method may be modified for locating the sources in random acoustic
waveguide \cite{BIT}. 
Consequently, this method provides a simple and efficient alternative to detect the black box embedded in the stratified ocean. 


\end{document}